\newcommand{\gc}{ [ \hspace{-0.65mm} [}
\newcommand{\dc}{]  \hspace{-0.65mm} ]}
\newcommand{\oc}{\mathcal{O}}
\newcommand{\mc}{\mathcal{M}}
\newcommand{\ia}{i,\alpha}
\newcommand{\hc}{\mathcal{H}}
\newtheorem{theo}{Theorem}[section]
\newtheorem{prop}[theo]{Proposition}
\newtheorem{lem}[theo]{Lemma}
\newtheorem{cor}[theo]{Corollary}
\theoremstyle{definition}
\newtheorem{defi}[theo]{Definition}
\newtheorem{nota}[theo]{Notation}
\newtheorem{conv}[theo]{Convention}
\newcommand{\bbN}{\mathbb{N}} 
\newcommand{\bbQ}{\mathbb{Q}} 
\newcommand{\bbR}{\mathbb{R}} 
\newcommand{\bbC}{\mathbb{C}} 
\newcommand{\bbK}{\mathbb{K}} 
\newcommand{\OqMmp}{\oc_q(\mc_{m,p}(}
\newcommand{\mmpc}{\OqMmp\bbK))}
\newcommand{\pmmpc}{\OqMmp\bbC))}
\newcommand{\Mmpc}{\mc_{m,p}(\bbC)}
\newcommand{\OMmpc}{\oc(\Mmpc)}
\newcommand{\Mmpnonneg}{\mc_{m,p}^{\ge0}(\bbR)}
\newcommand{\lc}{\mathcal{L}}
\newcommand{\wop}{w_\circ^p}
\newcommand{\wom}{w_\circ^m}
\newcommand{\woN}{w_\circ^N}
\newcommand{\Ecirc}{E^\circ}
\newcommand{\MmpK}{\mc_{m,p}(K)}
\newcommand{\onem}{\gc 1,m \dc}
\newcommand{\onep}{\gc 1,p \dc}
\newcommand{\onel}{\gc 1,l \dc}
\DeclareMathOperator{\chr}{char}
\DeclareMathOperator{\Spec}{Spec}
\newcommand{\HSpec}{\hc\mbox{-}\Spec}
\newcommand{\Honespec}{H_1\mbox{-}\Spec}
\newcommand{\Htwospec}{H_2\mbox{-}\Spec}
\DeclareMathOperator{\Fract}{Fract}
\def\edge{\ar@{-}}
\def\plc{*{}+0}
\def\bigblacksquare{\vrule height 8pt depth 2pt width 10pt}
\def\mdblk{\save+<0ex,0ex>\drop{\bigblacksquare}\restore}
\title{Torus-invariant prime ideals in quantum matrices, 
totally nonnegative cells and
symplectic leaves}
\author{K.R. Goodearl\thanks{\,The research of the first named author was supported
by a grant from the National Science Foundation (USA).},~~S. Launois\thanks{\,The research of the second named author was supported by a Marie Curie European Reintegration Grant within the
$7^{\mbox{th}}$ European Community Framework Programme.}~~and
T.H. Lenagan}
\long\def\symbolfootnote[#1]#2{\begingroup\def\thefootnote{\fnsymbol{footnote}}\footnote[#1]{#2}\endgroup}
\def\keywords#1{\def\@keywords{#1}}
\def\@keywords{}
\def\subjclass#1{\def\@subjclass{#1}}
\def\@subjclass{}
\date{}
\begin{document}
\maketitle

\subjclass{16W35; 15A48, 17B37, 17B63, 20G42, 53D17}
\symbolfootnote[0]{{\it 2000 Mathematics Subject Classification.}\enspace \@subjclass. }

\keywords{Quantum matrices; torus-invariant prime ideals; quantum minors; totally nonnegative cells; symplectic leaves}
\symbolfootnote[0]{{\it Key words and phrases.}\enspace \@keywords.}

\abstract{\footnotesize The algebra of quantum matrices of a given size supports a 
rational torus action by automorphisms. It follows from work of Letzter and the first named author that to understand the prime and primitive spectra of this algebra, the first step is to understand the prime ideals that are invariant under the torus action. In this paper, we prove that a family of quantum minors is the set of all quantum minors that belong to a given torus-invariant prime ideal of a quantum matrix algebra if and only if the corresponding family of minors defines a non-empty totally nonnegative cell in the space of totally nonnegative real matrices of the appropriate size. As a corollary, we obtain explicit generating sets of quantum minors for the torus-invariant prime ideals of quantum matrices in the case where the quantisation parameter $q$ is transcendental over $\bbQ$.}

\section*{Introduction} 

In recent publications, the same combinatorial
description has arisen for three separate objects of interest: torus-invariant prime ideals in quantum matrix algebras $\mmpc$ \cite{Cau2}, torus-orbits of symplectic leaves in matrix Poisson varieties $\Mmpc$ \cite{BGY}, and totally nonnegative cells in spaces $\Mmpnonneg$ of totally nonnegative matrices \cite{Pos}. Connections between the second and third of these objects were developed in \cite{GLL}. Here we close the circle by linking the first and second objects. More detail follows.

Many quantum algebras have a natural action by an algebraic torus, and a key ingredient in the study of the structure of these algebras is an understanding of the torus-invariant objects. For example, the Stratification Theory of Letzter and the first named author \cite{GoLet2} shows that, in the generic case, a complete understanding of the prime
spectrum of the quantised coordinate ring of $m\times p$ matrices, $\mmpc$, would start by classifying the (finitely many) prime ideals invariant under the natural action of the torus $\hc = (\bbK^\times)^{m+p}$. In \cite{Cau2},
Cauchon succeeded in counting the number of $\hc$-invariant prime ideals in
$\mmpc$. His method involved a bijection between certain diagrams,
now known as \emph{Cauchon diagrams}, and the $\hc$-invariant primes. Considerable progress in the understanding of quantum matrices has been made since that time by using Cauchon diagrams. 

The semiclassical limit of the quantum matrix algebras $\mmpc$ is the classical coordinate ring of the matrix variety $\mc_{m,p}(\bbK)$, endowed with a Poisson bracket that encodes the
nature of the quantum deformation which leads to quantum matrices. As a
result, the variety $\mc_{m,p}(\bbK)$ is endowed with a Poisson structure. In the complex case ($\bbK=\bbC$), a natural action of the torus $\hc = (\bbC^\times)^{m+p}$ leads to a stratification of the variety $\Mmpc$ via $\hc$-orbits of
symplectic leaves. In \cite{BGY}, Brown, Yakimov and the first named author showed that
there are finitely many such $\hc$-orbits of symplectic leaves. Each $\hc$-orbit is defined by certain rank conditions on submatrices. The classification
is given in terms of certain permutations from the relevant symmetric group
with restrictions arising from the Bruhat order. 

The totally nonnegative part of the space $\mc_{m,p}(\bbR)$ of real $m\times p$ matrices consists of those matrices
whose minors are all nonnegative. One can specify a cell decomposition of the
set $\Mmpnonneg$ of totally nonnegative matrices by specifying exactly which minors are to
be zero/non-zero. In \cite{Pos}, Postnikov classified the non-empty cells by
means of a bijection with certain diagrams, known as \emph{Le-diagrams}. 

The interesting observation from the point of view of this work is that in
each of the above three sets of results, the combinatorial objects that arise
turn out to be the same! The definitions of Cauchon diagrams and Le-diagrams
are the same, and the restricted permutations arising in the
Brown-Goodearl-Yakimov study can be seen to correspond to Cauchon/Le diagrams via
the notion of pipe dreams.

Once one is aware of these combinatorial connections, the suggestion arises that there should be a
connection between torus-invariant prime ideals, torus-orbits of symplectic
leaves and totally nonnegative cells. 

In \cite{GLL}, we study this connection, and prove that a family of minors
defines a non-empty totally nonnegative cell in the
space of totally nonnegative matrices if and only if this family is exactly the set of minors
that vanish on the closure of a certain torus-orbit of symplectic leaves in
the matrix Poisson variety. In the present note, we complete the picture and study the
quantum case. Our main result is the following.
\medskip

\noindent {\bf Theorem~\ref{maintheo}.}
{\em 
Let $\mathcal{F}$ be a family of minors in the coordinate ring of the affine variety $\Mmpc$, for some positive integers $m$, $p$, and let $\mathcal{F}_q$ be the corresponding family of quantum minors in $\mmpc$, where $\bbK$ is a field and $q\in \bbK^\times$ is a non-root of unity. Then the following are equivalent:
\begin{enumerate}
\item The totally nonnegative cell associated to $\mathcal{F}$ in $\Mmpnonneg$ is non-empty. 
\item $\mathcal{F}$ is the set of all minors that vanish on the 
closure of some $(\bbC^\times)^{m+p}$-orbit of symplectic leaves in $\Mmpc$.
\item $\mathcal{F}_q$ is the set of all quantum minors that belong to some 
$(\bbK^\times)^{m+p}$-invariant prime ideal in $\mmpc$.
\end{enumerate}}
\smallskip
\noindent (The torus actions in (2) and (3) are standard, and are  recalled below.)
\medskip

The proof of this result (see Section \ref{section:main result}) relies on two
algorithms, the \emph{deleting-derivations algorithm} and its inverse the \emph{restoration algorithm}, that were first developed
for use in quantum matrices \cite{Cau, Cau2, Lau2}. We note that recently and
independently Casteels \cite{Ca} has developed graph-theoretic methods (also
based on the restoration algorithm) to compute the set of quantum minors that
belong to a torus-invariant prime in $\mmpc$.

The sets of minors that vanish on the closure of a torus-orbit of symplectic
leaves in $\Mmpc$ have been explicitly described in \cite{GLL},
based on results of Fulton \cite{Ful} and Brown-Goodearl-Yakimov \cite{BGY}. Further, Yakimov proved that the above sets of minors generate the ideals of polynomial functions vanishing on closures of torus-orbits of symplectic leaves \cite{Yak}.
As a consequence of the above theorem, we obtain explicit descriptions of the sets of quantum minors that
belong to a torus-invariant prime in $\mmpc$ (see Theorem \ref{qminorsHprimes}).

The importance of understanding the sets $\mathcal{F}_q$ rests on the conjecture of the first and third named authors that, in the generic case ($q$ not a root of unity), all torus-invariant prime ideals in $\mmpc$ are generated by quantum minors \cite{GoLen1}. 
In \cite{Lau1}, the second named author proved this conjecture when the base field $\bbK$ is the field of complex numbers and
the quantisation parameter $q$ is transcendental over the rationals. We extend that result here to arbitrary base fields of characteristic zero (Theorem \ref{extHprimegens}). Consequently, in that case we deduce from the above results explicit
generating sets of quantum minors for the torus-invariant prime ideals of
$\mmpc$. A different approach to this result, applicable
to many quantized coordinate algebras, has been recently and independently
developed by Yakimov in \cite{Yak}. Explicit generating sets for torus-invariant prime ideals in general will, of course, also follow if and when the above conjecture is established.\\

Throughout this paper, we use the following conventions:

$\bullet$ $\bbN$ denotes the set of 
natural numbers, and we set $\bbC^\times:=\bbC\setminus \{0\}$.

$\bullet$ If $I$ is any non-empty finite subset of 
$\bbN$, then $|I|$ denotes its cardinality.

$\bullet$ $\bbK$ is a field, 
$\bbK^{\times}:=\bbK\setminus
\{0\}$
and $q\in \bbK^{\times}$ is not a root of unity. 

$\bullet$ $m$ and $p$ are two positive integers.

$\bullet$ If $k$ is a positive integer, then $S_k$ denotes the group of
permutations of $\gc 1,k \dc:=\{1, \cdots, k\}$.

$\bullet$ Let $K$ be a $\bbK$-algebra and
$M=(x_{i,\alpha})\in \mc_{m,p}(K)$. If $I \subseteq \onem$ and $\Lambda
\subseteq \onep$ with $|I|=|\Lambda |=t \geq 1$, then we denote by $[I |
\Lambda ]_q(M)$ the corresponding \emph{quantum minor} of $M$. This is the element of
$K$ defined by: 
$$[I | \Lambda ]_q(M)= [i_1, \dots, i_k|\alpha_1 , \dots , \alpha_k]_q :=\sum_{\sigma \in S_k} (-q)^{l(\sigma)}
x_{i_1, \alpha_{\sigma (1)}} \cdots x_{i_k, \alpha_{\sigma (k)}},$$ 
where
$I=\{i_1, \dots, i_k\}$, $\Lambda=\{\alpha_1 , \dots , \alpha_k\}$ and
$l(\sigma)$ denotes the length of the 
$k$-permutation $\sigma$. Also, it is convenient
to allow the empty minor: $[\emptyset|\emptyset]_q(M) := 1 \in K$.
Whenever we write a quantum minor in the form $[i_1, \dots, i_k|\alpha_1 , \dots , \alpha_k]_q$, we tacitly assume that the row
and column indices are listed in ascending order, that is, $i_1< \cdots< i_l$
and $\alpha_1< \cdots< \alpha_l$.

\section{$\hc$-prime ideals of $\mmpc$.}
\label{sectionQuantumMatrices}

\subsection{Quantum matrices.}

We denote by $R:=\mmpc$ the standard quantisation of the ring of
regular functions on $m \times p$ matrices with entries in $\bbK$; 
the algebra $R$ is
the $\bbK$-algebra generated by the $m \times p $ indeterminates
$X_{\ia}$, for 
$1 \leq i \leq m$ and $ 1 \leq \alpha \leq p$, subject to the
following relations:
 \[
\begin{array}{ll}
X_{i, \beta}X_{i, \alpha}=q^{-1} X_{i, \alpha}X_{i ,\beta},
& (\alpha < \beta); \\
X_{j, \alpha}X_{i, \alpha}=q^{-1}X_{i, \alpha}X_{j, \alpha},
& (i<j); \\
X_{j,\beta}X_{i, \alpha}=X_{i, \alpha}X_{j,\beta},
& (i <j,\;  \alpha > \beta); \\
X_{j,\beta}X_{i, \alpha}=X_{i, \alpha} X_{j,\beta}-(q-q^{-1})X_{i,\beta}X_{j,\alpha},
& (i<j,\;  \alpha <\beta). 
\end{array}
\]

It is well known that $R$ can be presented as an iterated Ore extension over
$\bbK$, with the generators $X_{\ia}$ adjoined in lexicographic order.
Thus, the ring $R$ is a noetherian domain; its skew-field of
fractions is denoted by $F$ or $\Fract R$. 
Moreover, since $q$ is not a root of unity, it follows from
\cite[Theorem 3.2]{GoLet1} that all prime ideals of $R$ are completely
prime.

The \emph{quantum minors} in $R$ are the quantum minors of the matrix $(X_{\ia}) \in \mc_{m,p}(R)$. To simplify the notation, we denote by $[I | \Lambda ]_q$ the quantum minor of $R$ associated to the row-index set $I$ and the column-index set $\Lambda$.

It is easy to check that the torus $\hc:=\left( \bbK^\times
\right)^{m+p}$ acts on $R$ by $\bbK$-algebra automorphisms via:
$$(a_1,\dots,a_m,b_1,\dots,b_p).X_{\ia} = a_i b_\alpha X_{\ia} \quad {\rm
for~all} \quad \: (\ia)\in \gc 1,m \dc \times \gc 1,p \dc.$$ 
We refer to this action as the \emph{standard action} of $\left( \bbK^\times
\right)^{m+p}$ on $\mmpc$.
Recall that an \emph{$\hc$-prime ideal} of $R$ is a proper $\hc$-invariant ideal $P$ such that whenever $P$ contains a product $IJ$ of two $\hc$-invariant ideals, it must contain either $I$ or $J$. As $q$ is not a root of unity, it follows from
\cite[5.7]{GoLet2} that there are only finitely many $\hc$-primes in $R$ and that every
$\hc$-prime is completely prime. Hence, the $\hc$-prime ideals of $R$ coincide with the $\hc$-invariant primes. We denote by
$\HSpec(R)$ the set of $\hc$-primes of $R$.

\subsection{$\hc$-primes and Cauchon diagrams.}
\label{sectionCauchondiagram}

In \cite{Cau2}, Cauchon showed that his theory of deleting-derivations can be applied to the iterated Ore extension $R$. As a consequence, he was able 
to parametrise the set $\hc$-$\mathrm{Spec}(R)$ in terms of combinatorial objects called {\it Cauchon diagrams}.

\begin{defi}\cite{Cau2} {\rm
An $m\times p$ \emph{Cauchon diagram} $C$ is simply an $m\times p$ grid consisting of $mp$ squares in which certain squares are coloured black.  We require that the collection of black squares have the following property: If a square is black, then either every square strictly to its left is black or every square strictly above it is black.

We denote by $\mathcal{C}_{m,p}$ the set of $m\times p$ Cauchon diagrams.
}
\end{defi}

Note that we will often identify an $m \times p $ Cauchon diagram with the set of coordinates of its black boxes. Indeed, if $C \in \mathcal{C}_{m,p}$ and  $(\ia)
 \in \gc 1, m\dc \times \gc 1,  p\dc$, we will say that $(\ia) \in C$ if the box in row $i$ and column $\alpha$ of $C$ is black.

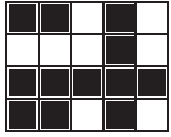
\begin{figure}[h]
\label{fig:CauchonDiagram}
\begin{center}
$$\xymatrixrowsep{0.01pc}\xymatrixcolsep{0.01pc}
\xymatrix{
\plc\edge[0,10]\edge[8,0] &&\plc\edge[8,0] &&\plc\edge[8,0]
&&\plc\edge[8,0] &&\plc\edge[8,0] &&\plc\edge[8,0] \\
 &\mdblk &&\mdblk &&&&\mdblk && &&\plc \\
\plc\edge[0,10] &&&&&&&&&&\plc \\
 &&&&&&&\mdblk && \\
\plc\edge[0,10] &&&&&&&&&&\plc \\
 &\mdblk &&\mdblk &&\mdblk &&\mdblk &&\mdblk \\
\plc\edge[0,10] &&&&&&&&&&\plc \\
 &\mdblk &&\mdblk &&&&\mdblk \\
\plc\edge[0,10] &&\plc &&\plc &&\plc &&\plc &&\plc 
}$$
\caption{An example of a $4\times 5$ Cauchon diagram}
\end{center}
\end{figure}

Recall \cite[Corollaire 3.2.1]{Cau2} that Cauchon has constructed (using the deleting-derivations algorithm) a bijection between $\hc$-$\mathrm{Spec}(\mmpc)$ and the collection $\mathcal{C}_{m,p}$. We discuss this bijection in more detail in Section \ref{section:main result}.

\begin{nota}
Let $C \in \mathcal{C}_{m,p}$. We denote by $J_C$ the unique $\hc$-prime ideal of $R$ corresponding to the Cauchon diagram $C$ under the above bijection.
\end{nota}

At this point, it is worth recalling that Cauchon diagrams are also closely
related to restricted permutations. More precisely, set 
\[
\mathcal{S}= S^{[-p,m]}_{m+p}:=\{w \in
S_{m+p} \ | \ -p \leq w(i) -i \leq m \mbox{ for all }i=1,2, \dots, m+p\}.
\]
The set 
$\mathcal{S}$ is an initial interval of the symmetric group $S_{m+p}$ endowed with the
Bruhat order. Namely, we have \cite[Proposition 1.3]{Lau3}, \cite[Lemma 3.12]{BGY}: 
\[
\mathcal{S}=\left\{ w
\in S_{m+p} \ \biggm| \ w \leq 
\begin{bmatrix} 1 & 2 & \dots & p &
p+1 & p+2 & \dots & m+p \\ 
m+1 & m+2 & \dots & m+p & 1 & 2& \dots & m
\end{bmatrix} \right\}.
\] 
It was proved in \cite[Corollary 1.5]{Lau3} that the
cardinality of $\mathcal{S}$ is equal to the number of $m \times p$ Cauchon diagrams. Note that one can construct an explicit bijection between these two sets by using the concept of pipe-dreams. (See \cite[Section 19]{Pos}.)

\subsection{Generators of $\hc$-prime ideals of $\mmpc$.}

In \cite{GoLen1}, the first and third named authors
conjectured that all
$\hc$-primes in $R$ are generated by quantum minors. (Of course, any prime generated by quantum minors is an $\hc$-prime, since every quantum minor of $R$ is an $\hc$-eigenvector.) They proved this
conjecture in the case where $m,p \leq 3$ \cite{GoLen1,GoLen2}. In
\cite[Th\'eor\`eme 3.7.2]{Lau1}, the second named author proved this conjecture in the case where
$\bbK=\bbC$ and the quantisation parameter $q$ is transcendental
over $\bbQ$. The result can then be extended to arbitrary base fields of characteristic $0$ (keeping $q$ transcendental over $\bbQ$), as we show below.

Note that the above conjecture is still open 
when we only assume that $q \in
\bbK^\times$ is not a root of unity.

Although in \cite{Lau2} an algorithm was developed 
that constructs, starting only from a
Cauchon diagram $C$, all of 
the quantum minors that belong to the $\hc$-prime ideal $J_C$, it is not easy to identify the families of
quantum minors that generate $\hc$-prime ideals. Casteels \cite{Ca} has
recently developed a graph theoretic method in order to compute these
families.  
  
In \cite[Theorem 4.2]{BGY}, Brown, Yakimov and the first named author
described the $\hc$-orbits of symplectic leaves of
$\Mmpc$ in terms of the vanishing and nonvanishing of
explicit sets of minors. Following the philosophy that symplectic leaves of
$\Mmpc$ should correspond bijectively to primitive
ideals of $\pmmpc$, they conjectured that a set of minors defines the closure
of an $\hc$-orbit of symplectic leaves if and only if the corresponding set of
quantum minors generates an $\hc$-prime ideal in $\pmmpc$ (see \cite[0.2]{BGY}).
This conjecture is proved here (see Theorem \ref{qminorsHprimes}).

\begin{lem} \label{H1H2prime}
Let $K_1\subseteq K_2$ be infinite fields, and let $A$ be a noetherian $K_2$-algebra supporting a rational action of a torus $H_2= (K_2^\times)^r$ by $K_2$-algebra automorphisms. Set $H_1 := (K_1^\times)^r$, which acts on $A$ by restriction of the $H_2$-action. Then the $H_1$-prime ideals of $A$ coincide with the $H_2$-prime ideals.
\end{lem}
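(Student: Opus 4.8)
The plan is to show mutual inclusion of the two families of invariant prime ideals by exploiting the fact that a torus action is determined by its weight-space decomposition, which is insensitive to enlarging or shrinking the scalar field of the torus. First I would recall that, since both torus actions are rational, $A$ decomposes as a direct sum of weight spaces $A = \bigoplus_{\lambda \in \bbZ^r} A_\lambda$, and that this decomposition is literally the same whether we use $H_1$ or $H_2$: the weight space $A_\lambda$ consists of those $a \in A$ with $h \cdot a = \lambda(h) a$ for all $h$ in the torus, and since $K_1^\times$ is Zariski-dense in $K_2^\times$ (both fields being infinite), an element is an $H_2$-eigenvector of weight $\lambda$ if and only if it is an $H_1$-eigenvector of weight $\lambda$. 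Consequently an ideal of $A$ is $H_1$-invariant if and only if it is $H_2$-invariant: in both cases this is equivalent to being a \emph{graded} ideal for the common $\bbZ^r$-grading (here one uses that $H_i$-invariant is equivalent to being spanned by the eigenvectors it contains, which holds because the action is rational).

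Next I would observe that the notion of $H_i$-prime depends only on the collection of $H_i$-invariant ideals and the multiplication, via the defining property that $P \supseteq IJ$ forces $P \supseteq I$ or $P \supseteq J$ for invariant ideals $I,J$. Since the two collections of invariant ideals coincide by the previous step, the property ``$P$ is $H_1$-prime'' is word-for-word the same as ``$P$ is $H_2$-prime''. Hence the two sets of prime ideals are equal, and no further argument is needed — in particular we do not even need noetherianity or the alternative characterisation of $H$-primes as $H$-invariant primes; those hypotheses are presumably included only because they are available in the intended application and make the statement cleaner to invoke later.

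The only point requiring a little care — and the step I would flag as the main (minor) obstacle — is the claim that the $H_i$-weight decomposition is independent of $i$, i.e.\ that restricting a rational $H_2$-action to the subtorus $H_1$ does not merge or change weight spaces. The content is that for a fixed weight $\lambda \in \bbZ^r$, the polynomial identity $h \cdot a = \lambda(h)\, a$, viewed as a regular condition on $h$ ranging over the torus, holds for all $h \in (K_2^\times)^r$ as soon as it holds for all $h \in (K_1^\times)^r$; this is exactly Zariski-density of $(K_1^\times)^r$ in $(K_2^\times)^r$, which holds precisely because $K_1$ is infinite. Once this is in hand, everything is formal, so I would present the proof as: (i) rational action $\Rightarrow$ each $H_i$-invariant ideal is the span of the eigenvectors it contains; (ii) the $H_1$- and $H_2$-eigenvectors coincide, with the same weights, by density; (iii) therefore the classes of invariant ideals coincide, whence so do the classes of invariant-prime ideals.
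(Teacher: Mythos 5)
Your proof is correct, but it takes a genuinely different (and arguably cleaner) route than the paper's. The paper argues via \cite[Proposition II.2.9]{BrGo} that every $H_2$-prime is actually a prime ideal, hence $H_1$-prime; then it verifies that the restricted $H_1$-action is rational (by passing to a $K_1$-basis of $A$), applies the same proposition to see that every $H_1$-prime is prime, and finishes by a stabiliser argument: the $H_2$-stabiliser in $\Spec A$ of an $H_1$-prime is Zariski-closed and contains the dense subgroup $H_1$, hence equals $H_2$. That route leans on noetherianity (through the cited result from \cite{BrGo}) and on the induced topology on $\Spec A$. Your approach bypasses both. You instead show that the $\bbZ^r$-grading given by the weight-space decomposition is the same whether one uses $H_1$ or $H_2$ (by Zariski density of $(K_1^\times)^r$ in $(K_2^\times)^r$), so that the $H_1$-invariant and $H_2$-invariant ideals are both precisely the homogeneous ideals for this common grading; since the notion of $H$-prime is intrinsic to the lattice of $H$-invariant ideals under ideal multiplication, the two notions of invariant-prime coincide. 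You are right that this avoids noetherianity and the identification of $H$-primes with $H$-invariant primes, which makes your version of the lemma marginally more general; the paper invokes those facts because they are freely available in the application.

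One small organisational caveat: your opening clause ``since both torus actions are rational'' asserts rationality of the restricted $H_1$-action before justifying it. The actual justification is your density claim, which shows the $H_1$- and $H_2$-eigenvectors of each rational weight $\lambda\in\bbZ^r$ coincide, combined with $H_2$-rationality giving $A = \bigoplus_\lambda A_\lambda$; those two facts together show that $A$ is spanned by $H_1$-eigenvectors with rational $H_1$-eigenvalues, whence the $H_1$-action is rational (the paper establishes the same point via a $K_1$-basis of $A$). There is no gap, but the dependence should be stated in the right order: $H_2$-rationality and density first, $H_1$-rationality as a consequence.
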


\begin{proof} By \cite[Proposition II.2.9]{BrGo}, every $H_2$-prime of $A$ is prime, and consequently $H_1$-prime.

Rationality of the $H_2$-action on $A$ implies that $A$ has a $K_2$-basis $(a_i)_{i\in I}$ of $H_2$-eigenvectors whose $H_2$-eigenvalues are rational characters of $H_2$ \cite[Theorem II.2.7]{BrGo}. If $(\kappa_j)_{j\in J}$ is a basis for $K_2$ over $K_1$, then $(\kappa_j a_i)_{i\in I,j\in J}$ is a $K_1$-basis for $A$, consisting of $H_2$-eigenvectors with rational $H_2$-eigenvalues. Observe that the restriction to $H_1$ of any rational character of $H_2$ is a rational character of $H_1$. Consequently, each $\kappa_j a_i$ is an $H_1$-eigenvector whose $H_1$-eigenvalue is a rational character of $H_1$. Thus, the $H_1$-action on $A$ is rational.

Now any $H_1$-prime $P$ of $A$ is prime by \cite[Proposition II.2.9]{BrGo}. Consider the induced action of $H_2$ on $\Spec (A)$ by homeomorphisms, and let $S$ be the stabilizer of $P$ in $H_2$ under this action. It follows from \cite[Lemma II.2.8]{BrGo} that $S$ is a Zariski-closed subgroup of $H_2$. On the other hand, $S\supseteq H_1$, and $H_1$ is Zariski-dense in $H_2$ because $K_1$ is infinite. Therefore $S=H_2$, whence $P$ is an $H_2$-prime, as required.
\end{proof}

\begin{prop} \label{Hprimefunctoriality}
Let $K_1\subseteq K_2$ be fields, $q\in K_1^\times$ a non-root of unity, and identify $\OqMmp K_2))$ with $\OqMmp K_1)) \otimes_{K_1} K_2$. Set $H_i := (K_i^\times)^{m+p}$ for $i=1,2$, and let $H_i$ act on $\OqMmp K_i))$ by $K_i$-algebra automorphisms in the standard way. Then the rule $P\mapsto P\otimes_{K_1} K_2$ gives a bijection
$$\Honespec \OqMmp K_1)) \longrightarrow \Htwospec \OqMmp K_2)).$$
\end{prop}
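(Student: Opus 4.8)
The statement asserts that extension of scalars $P \mapsto P \otimes_{K_1} K_2$ is a bijection between $H_1$-primes of $A_1 := \OqMmp K_1))$ and $H_2$-primes of $A_2 := \OqMmp K_2))$. The natural strategy is to combine Lemma~\ref{H1H2prime} with a standard faithfully-flat descent argument, after first reducing to the case where both fields are infinite. Since $q$ is not a root of unity, all $\hc$-primes in $A_i$ are completely prime, and—crucially—there are only finitely many of them, each generated by a finite set of $H_i$-eigenvectors (indeed, by \cite[5.7]{GoLet2} the $\hc$-primes are precisely the primes in finitely many $\hc$-strata and they are rationally detectable). The key structural fact I would exploit is that $A_1$ has a $K_1$-basis of $H_1$-eigenvectors, so every $H_1$-stable $K_1$-subspace $V \subseteq A_1$ satisfies $(V \otimes_{K_1} K_2) \cap A_1 = V$ and $V \otimes_{K_1} K_2$ is $H_2$-stable; conversely every $H_2$-stable $K_2$-subspace $W \subseteq A_2$ is spanned by $H_2$-eigenvectors and is of the form $V \otimes_{K_1} K_2$ for $V := W \cap A_1$ (here one uses that the $H_2$-eigenvectors with a fixed rational eigenvalue form a $K_2$-subspace defined over $K_1$, since the eigenvalue restricts to a rational character of $H_1$).

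First I would handle the finiteness/infiniteness issue: if $K_1$ is finite then $q=1$, contradicting the non-root-of-unity hypothesis, so $K_1$ (hence $K_2$) is infinite and Lemma~\ref{H1H2prime} applies to $A_2$ over $K_1 \subseteq K_2$, identifying $H_1$-primes of $A_2$ with $H_2$-primes of $A_2$. Thus it suffices to show that $P \mapsto P \otimes_{K_1} K_2$ is a bijection between $H_1$-primes of $A_1$ and $H_1$-primes of $A_2$. Next, by the eigenvector-basis observation above, the operations $V \mapsto V \otimes_{K_1} K_2$ and $W \mapsto W \cap A_1$ are mutually inverse, inclusion-preserving bijections between $H_1$-stable $K_1$-subspaces of $A_1$ and $H_1$-stable $K_2$-subspaces of $A_2$, and they visibly send (two-sided) ideals to ideals and respect products of ideals. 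So the correspondence restricts to an inclusion-preserving bijection between $H_1$-stable ideals; it remains to check it matches up the \emph{prime} $H_1$-stable ideals on each side.

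For that, I would argue: if $P$ is an $H_1$-prime of $A_1$, then $P \otimes_{K_1} K_2$ is an $H_1$-stable ideal of $A_2$, and $A_2/(P\otimes K_2) \cong (A_1/P)\otimes_{K_1} K_2$. Since $A_1/P$ is an $H_1$-prime-factor it is in fact a domain (complete primeness, as $q$ is not a root of unity), and one must see that $(A_1/P)\otimes_{K_1} K_2$ is an $H_1$-prime ring. Because $A_1/P$ still has a $K_1$-basis of $H_1$-eigenvectors, any $H_1$-stable nonzero ideal of the tensor product contracts to a nonzero $H_1$-stable ideal of $A_1/P$ — and here I would invoke the fact that $A_1/P$, being an $\hc$-prime factor ring of quantum matrices, is $H_1$-simple, i.e.\ has no nonzero proper $H_1$-stable ideals (this is the standard consequence of the Stratification Theory: $\hc$-primes are the primes maximal in their $\hc$-strata, equivalently their factor rings are $\hc$-simple). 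Hence $(A_1/P)\otimes_{K_1} K_2$ is $H_1$-simple, in particular $H_1$-prime, so $P\otimes_{K_1}K_2$ is an $H_1$-prime of $A_2$. Conversely, if $Q$ is an $H_1$-prime of $A_2$, set $P := Q \cap A_1$; this is an $H_1$-stable ideal with $P \otimes_{K_1} K_2 = Q$ (by the subspace bijection), and $A_1/P$ embeds in $A_2/Q$ with the latter $H_1$-simple, forcing $A_1/P$ to be $H_1$-simple hence $P$ to be an $H_1$-prime. The two assignments are mutually inverse by the subspace bijection, giving the desired bijection $\Honespec A_1 \to \Htwospec A_2$.

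\textbf{Main obstacle.} The delicate point is the passage through primeness: concretely, showing that $(A_1/P)\otimes_{K_1}K_2$ remains $H_1$-prime (equivalently $H_1$-simple). This rests on the $\hc$-simplicity of $\hc$-prime factors of $\mmpc$ and on the eigenvector-basis descent argument; the purely ring-theoretic tensor-product manipulations and the reduction via Lemma~\ref{H1H2prime} are routine, but assembling the $H_1$-stable-subspace/ideal dictionary carefully — in particular verifying that $H_2$-stable subspaces of $A_2$ are defined over $K_1$ — is where the real content lies, and it should be spelled out explicitly rather than waved through.
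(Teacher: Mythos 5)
There is a genuine gap in your argument, concentrated at precisely the point you flag at the end as the ``main obstacle'' but then treat as routine. Your proposed dictionary between $H_1$-stable $K_1$-subspaces of $A_1$ and $H_1$-stable (or even $H_2$-stable) $K_2$-subspaces of $A_2$ via $V\mapsto V\otimes_{K_1}K_2$ and $W\mapsto W\cap A_1$ is false. Having an eigenvector basis only tells you that a stable subspace $W$ of $A_2$ decomposes as $\bigoplus_\chi W_\chi$ with $W_\chi \subseteq (A_1)_\chi\otimes_{K_1}K_2$; it does \emph{not} force each $W_\chi$ to be defined over $K_1$. (Already in a two-dimensional eigenspace $(A_1)_\chi\otimes_{K_1}K_2$, a $K_2$-line need not meet $(A_1)_\chi$ nontrivially.) Consequently, the step where you deduce that a nonzero $H_1$-stable ideal $I$ of $(A_1/P)\otimes_{K_1}K_2$ ``contracts to a nonzero $H_1$-stable ideal of $A_1/P$'' is unsupported, and $H_1$-simplicity of $A_1/P$ alone does \emph{not} yield $H_1$-simplicity of $(A_1/P)\otimes_{K_1}K_2$: with trivial group, $\bbC$ is simple over $\bbR$ yet $\bbC\otimes_\bbR\bbC\cong\bbC\times\bbC$ is not prime. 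The reverse direction (that $Q\cap A_1$ extends back to $Q$) suffers from the same problem.

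What is missing is the centrality input. The paper's proof invokes \cite[Theorem II.6.4]{BrGo} to record that every $H_i$-prime $P$ of $A_i$ is \emph{strongly $H_i$-rational}, i.e.\ $Z(\Fract A_i/P)^{H_i}=K_i$, and then cites \cite[Proposition 3.3]{GoLen1} — which is precisely the statement that, under rationality and strong $H$-rationality, extension of scalars is a bijection on $H$-primes — before finishing with Lemma~\ref{H1H2prime} to convert $H_1$-primes of $A_2$ into $H_2$-primes. Your reduction to Lemma~\ref{H1H2prime} and your observation that $K_1$ must be infinite are both fine and match the paper's route; but the core of the bijection cannot be carried by $H$-simplicity plus an eigenvector basis. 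You must either establish the strong rationality $Z(\Fract(A_1/P))^{H_1}=K_1$ and run the standard Azumaya-style argument (any nonzero $H_1$-stable two-sided ideal of $(A_1/P)\otimes_{K_1}K_2$, taking an element with a minimal number of terms $\sum a_j\otimes\lambda_j$ and conjugating, produces a centralizing element over $K_1$, which strong rationality forces to be a scalar), or cite \cite[Proposition 3.3]{GoLen1} where this is carried out.
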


\begin{proof} Set $A_i := \OqMmp K_i))$, and recall that $H_i$ acts rationally on $A_i$. By \cite[Theorem II.6.4]{BrGo}, every $H_i$-prime $P$ of $A_i$ is \emph{strongly $H_i$-rational} in the sense that $Z(\Fract A_i/P)^{H_i}= K_i$. (See the proof of \cite[Theorem II.5.14]{BrGo} for a verification of the required hypotheses.)

The action of $H_1$ on $A_1$ extends naturally to an action of $H_1 \equiv H_1\times \langle\text{id}\rangle$ on $A_2$ (by $K_2$-algebra automorphisms), and \cite[Proposition 3.3]{GoLen1} shows that the rule $P\mapsto P\otimes_{K_1} K_2$ gives a bijection $\Honespec (A_1) \rightarrow \Honespec (A_2)$. By Lemma \ref{H1H2prime}, $\Honespec (A_2)= \Htwospec (A_2)$, and the proposition is proved.
\end{proof}

\begin{theo} \label{extHprimegens}
Assume that $\chr(\bbK)=0$ and that $q$ is transcendental over $\bbQ$. Then every $\hc$-prime of $\mmpc$ is generated, as a left {\rm(}or right{\rm)} ideal, by the quantum minors that it contains. 
\end{theo}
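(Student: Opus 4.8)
The plan is to reduce to the already-established case $\bbK = \bbC$ via the functoriality established in Proposition \ref{Hprimefunctoriality}. First I would record the known result \cite[Th\'eor\`eme 3.7.2]{Lau1}: when the base field is $\bbC$ and $q$ is transcendental over $\bbQ$, every $\hc$-prime of $\oc_q(\mc_{m,p}(\bbC))$ is generated by the quantum minors it contains. Then, given our $\bbK$ with $\chr(\bbK)=0$ and $q$ transcendental over $\bbQ$, I would exhibit a common field containing the relevant data: let $K_0 = \bbQ(q) \subseteq \bbK$ and also $K_0 = \bbQ(q) \subseteq \bbC$ (using that $q$ transcendental over $\bbQ$ allows us to embed $\bbQ(q)$ into $\bbC$, sending the indeterminate $q$ to any fixed transcendental complex number, which we may as well call $q$ again). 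Applying Proposition \ref{Hprimefunctoriality} twice — once for $K_0 \subseteq \bbK$ and once for $K_0 \subseteq \bbC$ — gives bijections
\[
\Honespec \oc_q(\mc_{m,p}(K_0)) \longrightarrow \HSpec \oc_q(\mc_{m,p}(\bbK))
\qquad\text{and}\qquad
\Honespec \oc_q(\mc_{m,p}(K_0)) \longrightarrow \HSpec \oc_q(\mc_{m,p}(\bbC)),
\]
both given by the extension-of-scalars rule $P \mapsto P \otimes_{K_0} (-)$.

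The second step is to check that this correspondence is compatible with generation by quantum minors. Here the key point is that the quantum minors of $\oc_q(\mc_{m,p}(K_0))$ are, under the identification $\oc_q(\mc_{m,p}(K)) = \oc_q(\mc_{m,p}(K_0)) \otimes_{K_0} K$, precisely the quantum minors of $\oc_q(\mc_{m,p}(K))$ (the defining formula for $[I|\Lambda]_q$ has coefficients in $\bbZ[q,q^{-1}] \subseteq K_0$, independent of the base field). So if $P_0 \in \Honespec \oc_q(\mc_{m,p}(K_0))$ and $P_0$ is generated as a left ideal by a set $\mathcal{G}$ of quantum minors, then $P_0 \otimes_{K_0} K$ is generated as a left ideal of $\oc_q(\mc_{m,p}(K))$ by the same set $\mathcal{G}$ — extension of scalars is exact and commutes with forming the left ideal generated by a subset. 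Conversely, if $P_0 \otimes_{K_0} \bbC$ is generated by the quantum minors it contains, I need to descend this to $P_0$: the quantum minors in $P_0$ are exactly those quantum minors lying in $P_0 \otimes_{K_0} \bbC$ (since a quantum minor $x \in \oc_q(\mc_{m,p}(K_0))$ lies in $P_0 \otimes_{K_0}\bbC$ iff $x \in P_0$, as $(P_0 \otimes_{K_0} \bbC) \cap \oc_q(\mc_{m,p}(K_0)) = P_0$ by flatness/faithful flatness of $\bbC$ over $K_0$), and if these generate $P_0 \otimes_{K_0}\bbC$ then — again by faithful flatness, which lets one detect equality of a subideal with the whole ideal after base change — the same quantum minors generate $P_0$ as a left ideal of $\oc_q(\mc_{m,p}(K_0))$.

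Putting the pieces together: start with an arbitrary $\hc$-prime $P$ of $\oc_q(\mc_{m,p}(\bbK))$; by the first bijection, $P = P_0 \otimes_{K_0} \bbK$ for a unique $P_0 \in \Honespec \oc_q(\mc_{m,p}(K_0))$. By the second bijection, $P_0 \otimes_{K_0} \bbC$ is an $\hc$-prime of $\oc_q(\mc_{m,p}(\bbC))$, hence by \cite{Lau1} it is generated by the quantum minors it contains; by the descent argument, $P_0$ is generated by the quantum minors it contains; by the ascent argument, $P = P_0 \otimes_{K_0}\bbK$ is generated by the quantum minors it contains. The right-ideal statement follows by the same argument applied to the opposite algebra, or by the symmetry of quantum matrices under transpose together with the fact that $\oc_q(\mc_{m,p}(\bbK))^{\mathrm{op}} \cong \oc_{q^{-1}}(\mc_{m,p}(\bbK))$ with quantum minors corresponding to quantum minors. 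The main obstacle I anticipate is the bookkeeping around faithful flatness needed to transport "is generated by a given subset" across the base-change bijections in both directions simultaneously — ascent is automatic from right-exactness of $\otimes$, but the descent from $\bbC$ to $K_0$ genuinely uses that $\bbC$ is faithfully flat over $\bbQ(q)$, so I would make sure Proposition \ref{Hprimefunctoriality} (via \cite[Proposition 3.3]{GoLen1}) is being invoked in a form that records not just the bijection of $\hc$-primes but this intersection property $(P_0 \otimes_{K_0} K) \cap \oc_q(\mc_{m,p}(K_0)) = P_0$.
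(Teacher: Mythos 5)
Your proposal matches the paper's proof essentially step for step: both pass through the subfield $\bbQ(q)$, apply Proposition~\ref{Hprimefunctoriality} twice (for $\bbQ(q)\subseteq\bbK$ and $\bbQ(q)\subseteq\bbC$), invoke \cite[Th\'eor\`eme~3.7.2]{Lau1} over $\bbC$, and then ascend and descend generation by quantum minors using (faithful) flatness of field extensions, noting that the quantum minors live already over $\bbQ(q)$. The only cosmetic difference is your suggested detour through the opposite algebra for the right-ideal statement, where the paper simply observes that the identical argument applies since the cited theorem and the base-change facts are two-sided.
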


\begin{proof} First, consider the subfield $K_1 := \bbQ(q) \subseteq \bbK$, and identify $\mmpc$ with $\OqMmp K_1))\otimes_{K_1} \bbK$. Set $H_1 := (K_1^\times)^{m+p}$, with the standard action on $\OqMmp K_1))$. By Proposition \ref{Hprimefunctoriality}, any $\hc$-prime of $\mmpc$ has the form $P\otimes_{K_1} \bbK$ for some $H_1$-prime $P$ of $\OqMmp K_1))$. Hence, it suffices to show that $P$ is generated, as a left (or right) ideal of $\OqMmp K_1))$, by the quantum minors it contains.

Now identify $K_1$ with a subfield of $\bbC$, and set $H_2 := (\bbC^\times)^{m+p}$, with the standard action on $\pmmpc$. By Proposition \ref{Hprimefunctoriality}, $P\otimes_{K_1} \bbC$ is an $H_2$-prime of $\pmmpc$, and thus by \cite[Th\'eor\`eme 3.7.2]{Lau1}, $P\otimes_{K_1} \bbC$ is generated, as a left (or right) ideal of $\pmmpc$, by the set $X$ of quantum minors it contains. Note that $X$ is also the set of quantum minors contained in $P$, and let $P'$ be the left ideal of $\OqMmp K_1))$ generated by $X$. Then $P'\otimes_{K_1} \bbC= P\otimes_{K_1} \bbC$, and consequently $P'=P$. Therefore $P$ is generated as a left ideal by $X$, and similarly as a right ideal.
\end{proof}

\section{$\hc$-orbits of symplectic leaves in $\Mmpc$.}

In this section, we study the standard Poisson structure of the
coordinate ring $\pmmpc$ coming from the commutators of $\pmmpc$. Recall that a
{\it Poisson algebra} (over $\bbC$) is a commutative
$\bbC$-algebra $A$ equipped with a Lie bracket $\{-,-\}$ which is a
derivation (for the associative multiplication) in each variable. The
derivations $\{a,-\}$ on $A$ are called {\it Hamiltonian derivations}. When
$A$ is the algebra of complex-valued $C^{\infty}$ functions on a smooth affine
variety $V$, one can use Hamiltonian derivations in order to define
Hamiltonian paths in $V$. A {\it Hamiltonian path in $V$} is a smooth path
$\gamma : [0,1] \rightarrow V$ such that there exists $f \in C^{\infty}(V)$
with $\frac{d\gamma}{dt}(t)=\xi_f(\gamma(t))$ for all $0 < t <1$, where
$\xi_f$ denotes the vector field associated to the Poisson derivation
$\{f,-\}$. It is easy to check that the relation ``connected by a piecewise
Hamiltonian path'' 
is an equivalence relation. The equivalence classes of this
relation are called the {\it symplectic leaves} of $V$; they form a partition
of $V$. 

A {\it Poisson ideal} of $A$ is any ideal $I$ such that $\{A,I\}
\subseteq I$, and a {\it Poisson prime} ideal is any prime ideal which
is also a Poisson ideal. The set of Poisson prime ideals in $A$ forms the {\it
Poisson prime spectrum}, denoted $\mathrm{PSpec} (A)$, which is given the relative Zariski
topology inherited from $\mathrm{Spec} (A)$.

\subsection{The Poisson algebra $\OMmpc$.}

Denote by $\OMmpc$ the coordinate ring of the variety
$\Mmpc$; note that 
$\OMmpc$ is a (commutative) polynomial algebra in
$mp$ indeterminates $Y_{\ia}$ with $1 \leq i \leq m$ and $1 \leq \alpha \leq
p$. 

The variety $\Mmpc$ is a Poisson variety: there is a unique Poisson bracket on the coordinate ring $\OMmpc $ determined by the following data. For all $(\ia) < (k,\gamma)$, we set: 
$$\{Y_{\ia} ,Y_{k,\gamma} \}=\left\{ \begin{array}{ll}
Y_{\ia}Y_{k,\gamma} &  \mbox{ if } i=k \mbox{ and } \alpha < \gamma \\
Y_{\ia} Y_{k,\gamma} &  \mbox{ if } i< k \mbox{ and } \alpha = \gamma \\
0 &   \mbox{ if } i < k \mbox{ and } \alpha > \gamma \\
2 Y_{i,\gamma} Y_{k,\alpha} &  \mbox{ if } i< k  \mbox{ and } \alpha < \gamma .  \\ 
\end{array} \right.$$
This is the standard Poisson structure on
the affine variety $ \Mmpc$ (cf.~\cite[\S1.5]{BGY}); the Poisson algebra structure on $\OMmpc$ is the semiclassical limit
of the noncommutative algebras $\pmmpc$. 

As with quantum minors in $\mmpc$, we abbreviate the notation for the minors of the matrix $Y= (Y_{\ia})$, writing $[I|\Lambda]:=[ I|\Lambda](Y)$.

Note that the Poisson bracket on $\OMmpc$ extends uniquely to a Poisson bracket on $\mathcal{C}^{\infty}(\Mmpc)$, so that $\Mmpc$ can be viewed as a Poisson manifold. Hence, $\Mmpc$ can be decomposed as the disjoint union of its symplectic leaves.

\subsection{Torus action.}

Notice that the torus $\hc:=\left( \bbC^\times \right)^{m+p}$ acts on
$\OMmpc$ by Poisson automorphisms via: $$(a_1,\dots,a_m,b_1,\dots,b_p).Y_{\ia}
= a_i b_\alpha Y_{\ia} \quad {\rm for~all} \quad \: (\ia)\in \gc 1,m \dc
\times \gc 1,p \dc.$$ 
We denote by $\hc$-$\mathrm{PSpec}(\OMmpc)$ the set of
those Poisson primes of $\OMmpc$ that are invariant under this action of
$\hc$. Note that $\hc$ is acting rationally on $\OMmpc$.

At the geometric level, this action of the algebraic torus $\hc$ comes from the left action of $\hc$ on $\Mmpc$ by Poisson isomorphisms via:
$$(a_1,\dots,a_m,b_1,\dots,b_p).M :=\mbox{diag}(a_1, \dots, a_m)^{-1} \cdot  M \cdot \mbox{diag}(b_1, \dots, b_p)^{-1}.$$
This action of $\hc$ on $\Mmpc$ induces an action of
$\hc$ on the set $\mathrm{Sympl}(\Mmpc)$ of symplectic
leaves in $\Mmpc$ (cf.~\cite[\S0.1]{BGY}). As in \cite{BGY}, we view the
$\hc$-orbit of a symplectic leaf $\lc$ as the set-theoretic union
$\bigcup_{h\in\hc} h.\lc \subseteq \Mmpc$, rather than as the family $\{h.\lc \mid
h\in\hc\}$. We denote the set of such orbits by
$\hc$-$\mathrm{Sympl}(\Mmpc)$. These orbits were described
by Brown, Yakimov and the first named author who obtained the following results.

We use the notation of \cite{BGY} except that we replace $n$ by $p$. 
In particular, we set $N = m+p$. Let $\wom$, $\wop$ and $\woN$ denote the
respective longest elements in $S_m$, $S_p$ and $S_N$, respectively, so that 
$w_0^r(i) = r + 1 - i$ for $i = 1, . . . , r$. 
Recall from equation (3.24) and Lemma 3.12 of
\cite{BGY} that 
\begin{equation} \label{woNS}
\woN \mathcal{S}=S_N^{\geq (\wop,\wom)} := \{w\in S_N\mid w\geq (\wop,\wom)\},
\end{equation} 
where
\[
(\wop,\wom) := \begin{bmatrix} 1 & 2 & \dots & p & p+1 & p+2 & \dots & p+m \\ 
p & p-1 & \dots & 1 & p+m & p+m-1& \dots & p+1\end{bmatrix} .
\]

\begin{theo} 
{\rm \cite[Theorems 3.9, 3.13, 4.2]{BGY}}
\begin{enumerate}
\item There are only finitely many $\hc$-orbits of symplectic leaves in
$\Mmpc$, and they are smooth irreducible locally
closed subvarieties. 
\item The set $\hc$-$\mathrm{Sympl}(\Mmpc)$ of orbits 
{\rm(}partially ordered by
inclusions of closures{\rm)} is isomorphic to the set $S_N^{\geq (\wop,\wom)}$ with
respect to the Bruhat order. 
\item Each $\hc$-orbit of symplectic leaves is defined by the vanishing and
nonvanishing of certain sets of minors. 
\item Each closure of an $\hc$-orbit of symplectic leaves is defined by the
vanishing of a certain set of minors.
\end{enumerate}
\end{theo}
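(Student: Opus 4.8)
This theorem collects \cite[Theorems 3.9, 3.13 and 4.2]{BGY}; I outline the strategy I would follow to prove it. The key move is to recognise the Poisson variety $\Mmpc$ --- with the bracket displayed above and the standard $\hc$-action --- as a piece of the standard Poisson structure on $GL_N$ (and the partial flag varieties attached to it), whose symplectic foliation is already understood. Concretely, one relates $\OMmpc$, as a Poisson algebra carrying this torus action, to $\mathcal{O}(GL_N)$ with its standard Poisson--Lie structure, mirroring at the classical level the algebra embedding $\mathcal{O}_q(M_{m,p}(\bbK))\hookrightarrow \mathcal{O}_q(GL_N(\bbK))$ that sends $X_{\ia}$ to the top-right $m\times p$ block of generators; under this identification the matrix Poisson bracket becomes a restriction/projection of the standard (Sklyanin) bracket and $\hc$ becomes (the image of) a maximal torus acting by translations. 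The point is to reduce the three assertions to the known structure of the torus orbits of symplectic leaves in that setting.

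\textbf{Indexing the orbits and the closure order.} For the standard Poisson structure on $GL_N$, the torus orbits of symplectic leaves are classified by Weyl-group combinatorics: they are the double Bruhat cells $G^{u,v}=BuB\cap B_-vB_-$ with $u,v\in W=S_N$, each smooth, irreducible and locally closed, finite in number, with closures that are unions of smaller such cells. Transporting this along the identification above and tracking which pairs $(u,v)$ actually occur, one finds that the $\hc$-orbits of symplectic leaves of $\Mmpc$ are parametrised by the single-permutation set $S_N^{\geq (\wop,\wom)}=\{w\in S_N\mid w\geq(\wop,\wom)\}$, with inclusion of closures going over to the Bruhat order; applying $\woN$ this is also the set $\mathcal{S}=S^{[-p,m]}_{m+p}$ of restricted permutations, which is exactly the content of \eqref{woNS}. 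Parts (1) and (2) follow, finiteness being immediate from $|S_N|<\infty$.

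\textbf{Defining equations of the orbits and their closures.} For parts (3) and (4) one converts leaf membership into rank data: the $\hc$-leaf through a matrix $M$ is pinned down by the ranks of a distinguished family of submatrices of $M$, and fixing the orbit fixes each of these ranks. Since ``rank $\le r$ on a given submatrix'' is the simultaneous vanishing of all its $(r+1)\times(r+1)$ minors, while ``rank $= r$'' additionally requires the nonvanishing of some $r\times r$ minor, each orbit is carved out by an explicit set of vanishing minors together with an explicit set of nonvanishing ones; this gives (3). Discarding the nonvanishing conditions leaves a determinantal variety that plainly contains the orbit closure, and the crux --- the step I expect to be hardest --- is to prove that this determinantal variety \emph{equals} the closure, with no spurious components. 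Here one invokes Fulton's theory of matrix Schubert varieties and degeneracy loci \cite{Ful}, which shows that the relevant determinantal varieties are irreducible of the expected dimension; as the orbit is a smooth irreducible locally closed subvariety of that dimension lying inside, its closure is forced to be the whole determinantal variety, giving (4). Matching the minors attached to a given $w\in S_N^{\geq(\wop,\wom)}$ with the explicit families used later in \cite{GLL} is then combinatorial bookkeeping that requires care but no new idea.
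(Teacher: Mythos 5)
The paper does not prove this result; it is quoted wholesale from \cite[Theorems 3.9, 3.13, 4.2]{BGY}, so there is no in-paper argument to compare your sketch against, and the evaluation must be against BGY itself. Your overall strategy --- transfer the known classification of $T$-orbits of symplectic leaves on a Poisson--Lie group or flag variety to the matrix variety, then encode orbits and their closures via rank conditions on submatrices --- is the right philosophy and does track BGY in spirit.

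The concrete geometric bridge you propose, however, has a gap. You want to realize $\OMmpc$ as a Poisson subalgebra of $\mathcal{O}(GL_N)$ via the top-right block and then ``transport'' the double Bruhat cells $G^{u,v}$ down to $\Mmpc$. The first step is fine: the displayed bracket is closed on any rectangular block, so the inclusion $\OMmpc\hookrightarrow \mathcal{O}(M_N)$ is a map of Poisson algebras, i.e.\ the block projection $\pi: M_N\to \Mmpc$ is a Poisson surjection. But symplectic-leaf classifications do not push forward along a Poisson surjection: $\pi$ maps each leaf of the source \emph{into}, not \emph{onto}, a leaf of the target, fibres of $\pi$ meet many leaves, and several leaves of $\Mmpc$ can sit under a single $G^{u,v}$. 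So ``tracking which pairs $(u,v)$ occur'' is not a mechanism --- it is precisely where the work lies. What BGY actually do is relate $\Mmpc$ to the (Bruhat--)Poisson structure on a partial flag variety/Grassmannian of $GL_N$, where the big Schubert cell \emph{is} Poisson-isomorphic to $\Mmpc$; the single-permutation index set $S_N^{\geq(\wop,\wom)}$ in (2) and the rank/minor descriptions in (3)--(4) fall out of that Schubert-cell analysis, not from restricting the Sklyanin bracket to a block of the group. Finally, the appeal to Fulton's determinantal theory to identify the orbit closure with a determinantal variety is one level off for this statement: that input is used in the companion paper \cite{GLL} (and, for the ideal-theoretic strengthening, by Yakimov \cite{Yak}) to pin down the explicit minor families $\mathcal{M}(w)$, whereas \cite[Theorem 4.2]{BGY} only needs to exhibit a set of minors cutting out the closure set-theoretically, which it obtains directly from the Schubert stratification.
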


For $y \in S_N^{\geq (\wop,\wom)}$, we denote by $\mathcal{P}_{y}$ the $\hc$-orbit
of symplectic leaves described in \cite[Theorem 3.9]{BGY}.

These results have several consequences for the (potential) link between the Poisson structure of $\OMmpc$ and the noncommutative structures of $\pmmpc$ and $\mmpc$. For instance, combined with \cite[Corollary 1.5]{Lau3}, the theorem shows that the number of $\hc$-orbits of symplectic leaves in  $\Mmpc$ is the same as the number of $\hc$-prime ideals in $\mmpc$, and so the same as the number of $m \times p$ Cauchon diagrams. 

In \cite[Theorem 2.9]{GLL}, we extended the results of the previous theorem. More precisely, we defined, for each restricted permutation $w \in \mathcal{S}$, a family $\mathcal{M}(w)$ of minors and proved that a minor $[I|\Lambda]$ vanishes on $\overline{\mathcal{P}_{\woN w}}$ 
if and only if $[I|\Lambda] \in \mathcal{M}(w)$. (The conditions used to define $\mathcal{M}(w)$ will be given below in Definition \ref{definition-M(w)}, to define the corresponding family $\mathcal{M}_q(w)$ of quantum minors.)

To finish this section, let us mention that the symplectic leaves in
$\Mmpc$ are algebraic; that is, they are locally
closed subvarieties of $\Mmpc$. As a consequence,
\cite[Proposition 4.8]{Goo} applies to our situation, so that there are only
finitely many Poisson $\hc$-primes in $\OMmpc$. More precisely, the Poisson
$\hc$-primes in $\OMmpc$ are in bijection with the $\hc$-orbits of symplectic
leaves in $\Mmpc$, and the minors that belong to a
Poisson $\hc$-prime ideal are exactly those that vanish on the closure of the
corresponding $\hc$-orbit of symplectic leaves in
$\Mmpc$. Hence, the number of Poisson
$\hc$-primes in $\OMmpc$ is the same as the number of $m \times p$ Cauchon
diagrams, and the families of minors that belong to Poisson $\hc$-primes are
exactly the families $\mathcal{M}( w)$ with $w \in \mathcal{S}$.

 In \cite[Section 5]{GLL}, we constructed an (explicit) bijection between the set of $m\times p$ Cauchon diagrams and the set of Poisson $\hc$-primes in $\OMmpc$. As in \cite[Theorem 5.3]{GLL}, we denote by $J'_C$ the unique Poisson $\hc$-prime associated to the Cauchon diagram $C$ under this bijection.

\section{$q$-quantum matrices and the deleting-derivations algorithm.}

For the remainder of this section, $K$ denotes a $\bbK$-algebra which is
also a skew-field. Except where otherwise stated, all the matrices that are
considered have their entries in $K$.

\subsection{$q$-quantum matrices.}

\begin{defi} (See \cite[Chapter 4]{PaWa}.)
Let $M=(x_{i,\alpha})\in \mc_{m,p}(K)$. We say that $M$ is a {\em $q$-quantum matrix} if the following relations hold between 
the entries of $M$:
$$
\begin{array}{ll}
x_{i, \beta}x_{i, \alpha}=q^{-1} x_{i, \alpha}x_{i ,\beta},
& (\alpha < \beta); \\
x_{j, \alpha}x_{i, \alpha}=q^{-1}x_{i, \alpha}x_{j, \alpha},
& (i<j); \\
x_{j,\beta}x_{i, \alpha}=x_{i, \alpha}x_{j,\beta},
& (i <j,\;  \alpha > \beta); \\
x_{j,\beta}x_{i, \alpha}=x_{i, \alpha} x_{j,\beta}-(q-q^{-1})x_{i,\beta}x_{j,\alpha},
& (i<j,\;  \alpha <\beta). 
\end{array}$$
\end{defi}

In order to define the deleting-derivations algorithm in the next
section, we will need the following notation.

\begin{nota}
\begin{itemize}
\item We denote by $\leq$ the lexicographic ordering on $\bbN^2$. Recall that
$$(\ia) \leq (j,\beta) \Longleftrightarrow 
[(i < j) \mbox{ or } (i=j \mbox{ and } \alpha \leq \beta )].$$
\item Set $\Ecirc= \left(\gc 1,m \dc \times \gc 1,p \dc \right)
\setminus \{(1,1)\}$ and $E= \Ecirc\cup \{(m,p+1)\}$.
\item Let $(j,\beta) \in \Ecirc$. 
Then $(j,\beta)^{+}$ denotes the smallest element 
(relative to $\leq$) of the set 
$\left\{ (\ia) \in E \ | \ (j,\beta) < (\ia) \right\}$.
\end{itemize}
 \end{nota}

The deleting derivations/restoration algorithms will be applied to 
matrices that are not necessarily $q$-quantum matrices. However, 
the matrices involved do have reasonable 
commutation relations that lead to the following definition.

\begin{defi} Let $M=(x_{i,\alpha}) \in \mc_{m,p}(K)$ and let $(j,\beta) \in E$. We say that $M$ is a {\em $(j,\beta)$-$q$-quantum matrix} 
if the following relations hold between the entries of $M$. If $\left( \begin{array}{cc} x & y \\ z & t \end{array} \right)$ is any $2 \times 2$ sub-matrix of $M$, then 
 \begin{enumerate}
 \item $ yx=q^{-1} xy, \quad zx=q^{-1} xz, \quad zy=yz, \quad ty=q^{-1}
 yt, \quad tz=q^{-1} zt.$
 \item If $t=x_{(k,\gamma)}$, then $tx= \begin{cases} xt &(\text{if\ } (k,\gamma)\ge(j,\beta))\\
 xt-(q-q^{-1})yz &(\text{if\ } (k,\gamma)<(j,\beta)). \end{cases}$
\end{enumerate}
\end{defi}

Note that our definitions of $q$-quantum matrix and $(j,\beta)$-$q$-quantum
matrix differ slightly from those of \cite[Definitions III.1.1 and
III.1.3]{Cau1}. Because of this, we must interchange $q$ and $q^{-1}$ whenever
carrying over results from \cite{Cau1}, and we need a shift of index in the above definition. More precisely, Cauchon's definition of a $(j,\beta)$-$q$-quantum matrix \cite[Definition III.1.3]{Cau1} matches, after switching $q$ and $q^{-1}$, our definition of a $(j,\beta)^+$-$q$-quantum matrix.

\subsection{The deleting-derivations algorithm.}

Now, we recall the deleting-derivations algorithm (see \cite{Cau2}, Convention
4.1.1 and \cite{Lau1}, Conventions 2.2.3). This algorithm plays a central role
in the study of the $\hc$-prime ideals of the algebra of generic quantum
matrices.

\begin{conv}[Deleting-derivations algorithm]
\label{conv1}
Let $M=(x_{\ia}) \in \MmpK$ be a matrix. As $r$ runs over the set $E$, we define matrices $M^{(r)} :=(x_{\ia}^{(r)}) \in \MmpK$ as follows.
\begin{enumerate}
\item \underline{When $r=(m,p+1)$}, the entries of the matrix $M^{(m,p+1)}$ are defined by 
$x_{\ia}^{(m,p+1)}:=x_{\ia}$ for all $(\ia) \in \gc 1,m \dc \times \gc 1,p \dc$. 
\item \underline{Assume that $r=(j,\beta) \in \Ecirc$} and that the matrix $M^{(r^{+})}=(x_{\ia}^{(r^{+})})$ has 
already been defined. The entries $x_{\ia}^{(r)}$ of the matrix $M^{(r)}$ are defined as follows.
\begin{enumerate}
\item If $x_{j,\beta}^{(r^+)}=0$, then $x_{\ia}^{(r)}=x_{\ia}^{(r^+)}$ 
for all $(\ia) \in \gc 1,m \dc \times \gc 1,p \dc$.
\item If $x_{j,\beta}^{(r^+)}\neq 0$ and $(\ia) \in \gc 1,m \dc \times \gc 1,p \dc$, then 

$x_{\ia}^{(r)}= \left\{ \begin{array}{ll}
x_{\ia}^{(r^+)}-x_{i,\beta}^{(r^+)} \left( x_{j,\beta}^{(r^+)}\right)^{-1} 
x_{j,\alpha}^{(r^+)}
& \qquad \mbox{if }i <j \mbox{ and } \alpha < \beta \\
x_{\ia}^{(r^+)} & \qquad \mbox{otherwise.} \end{array} \right.$ 
\end{enumerate}
We say that {\em $M^{(r)}$ is the matrix obtained from $M$ by applying the standard deleting-derivations 
algorithm at step $r$}, and $x_{j,\beta}^{(r^+)}$ is called the {\em pivot at step $r$}.
\item \underline{If $r=(1,2)$}, then we set $t_{\ia}:=x_{\ia}^{(1,2)}$ for all $(\ia) \in \gc 1,m \dc \times \gc 1,p \dc$. Observe that $x^{(r)}_{\ia}= x^{(r^+)}_{\ia}$ for all $r\le (\ia)$, and so $t_{\ia}= x^{(\ia)}_{\ia}= x^{(\ia)^+}_{\ia}$ for all $(\ia)\in \Ecirc$.
\end{enumerate}
\end{conv}

When $m=p$, step 2 of the deleting-derivations algorithm can be written as $M^{(j,\beta)}= \phi_{(m,j,\beta)}(M^{(j,\beta)^+})$ in the notation of \cite[\S III.2.3]{Cau1}.

\begin{lem}
\label{lemjbetasqquantique}
Let $(j,\beta) \in E$. If $M=(x_{i,\alpha})\in \MmpK$ is a $q$-quantum matrix, then the matrix $M^{(j,\beta)}$ 
is $(j,\beta)$-$q$-quantum.
\end{lem}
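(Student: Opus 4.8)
The plan is to prove this by downward induction on $(j,\beta) \in E$ with respect to the lexicographic order, following the inductive structure of the deleting-derivations algorithm itself (Convention \ref{conv1}). The base case is $(j,\beta) = (m,p+1)$, where $M^{(m,p+1)} = M$ is a $q$-quantum matrix by hypothesis; since $(m,p+1)$ is the largest element of $E$, condition (2) of the definition of a $(j,\beta)$-$q$-quantum matrix is vacuous for every $2\times 2$ submatrix (the bottom-right entry $t = x_{(k,\gamma)}$ always satisfies $(k,\gamma) \ge (m,p+1)$ only when... in fact we must check: the largest index occurring in $M$ is $(m,p)$, which is $< (m,p+1)$, so condition (2) says $tx = xt - (q-q^{-1})yz$, which is exactly the fourth $q$-quantum relation), and condition (1) is just the standard $q$-quantum relations. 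So the base case holds.

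For the inductive step, suppose $(j,\beta) \in \Ecirc$ and $M^{(j,\beta)^+}$ is $(j,\beta)^+$-$q$-quantum; I must show $M^{(j,\beta)}$ is $(j,\beta)$-$q$-quantum. If the pivot $x_{j,\beta}^{(j,\beta)^+}$ is zero, then $M^{(j,\beta)} = M^{(j,\beta)^+}$, and one checks directly that a $(j,\beta)^+$-$q$-quantum matrix whose $(j,\beta)$-entry vanishes is automatically $(j,\beta)$-$q$-quantum (the only indices $(k,\gamma)$ with $(j,\beta) \le (k,\gamma) < (j,\beta)^+$ are $(k,\gamma) = (j,\beta)$ itself, and when $t = x_{(j,\beta)} = 0$ both formulas $tx = xt$ and $tx = xt - (q-q^{-1})yz$ must be reconciled — here one uses that $x_{j,\beta}=0$ forces the relevant products to vanish, which follows from relations (1) applied to submatrices containing $x_{j,\beta}$). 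If the pivot is nonzero, the entries transform by $x_{\ia}^{(j,\beta)} = x_{\ia}^{(j,\beta)^+} - x_{i,\beta}^{(j,\beta)^+}\bigl(x_{j,\beta}^{(j,\beta)^+}\bigr)^{-1}x_{j,\alpha}^{(j,\beta)^+}$ for $i<j$, $\alpha<\beta$, and are unchanged otherwise. One then verifies relations (1) and (2) for every $2\times 2$ submatrix of $M^{(j,\beta)}$ by substituting these formulas and using the $(j,\beta)^+$-$q$-quantum relations of $M^{(j,\beta)^+}$; this is essentially the content of Cauchon's \cite[Th\'eor\`eme III.2.1]{Cau1} (or the analogous statement there), transported through the change of conventions $q \leftrightarrow q^{-1}$ and the index shift noted after the definition of $(j,\beta)$-$q$-quantum matrix.

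The main obstacle — and the only genuinely computational part — is the last verification: checking relations (1) and (2) for the $2\times2$ submatrices of $M^{(j,\beta)}$ when the pivot is nonzero. The difficulty is that the transformation formula mixes four entries $x_{\ia}, x_{i,\beta}, x_{j,\beta}, x_{j,\alpha}$ of $M^{(j,\beta)^+}$, so a $2\times2$ submatrix of $M^{(j,\beta)}$ expands into an expression involving up to a $3\times3$ or larger block of $M^{(j,\beta)^+}$; one must carefully track which pairs of indices $(k,\gamma)$ fall below the threshold $(j,\beta)$ and which do not, since that determines whether the straight or the corrected commutation relation applies. The bookkeeping splits into several cases according to the position of the submatrix relative to row $j$ and column $\beta$. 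Rather than reproduce this in full, I would cite the corresponding result in \cite{Cau1} (with the stated translation of conventions) and indicate that the verification is routine given the $(j,\beta)^+$-$q$-quantum relations; the earlier remark in the paper aligning our $(j,\beta)$-$q$-quantum matrices with Cauchon's $(j,\beta)$-$q$-quantum matrices after the swap $q \leftrightarrow q^{-1}$ makes this transfer legitimate.
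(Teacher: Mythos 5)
Your plan takes essentially the same route as the paper's proof: downward induction along the deleting-derivations algorithm, with the commutation verification in the nonzero-pivot step deferred to the corresponding result of Cauchon (the paper cites \cite[Proposition III.2.3.1]{Cau1}) via the $q\leftrightarrow q^{-1}$ swap and index shift noted after the definition. One small point the paper adds that you omit: Cauchon's \cite{Cau1} treats only square matrices $O_q(\mathcal{M}_n(k))$, and the paper handles the rectangular $m\times p$ case either by observing that the same argument goes through verbatim or by adjoining suitable zero rows or columns to $M$ to reduce to the square case.
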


\begin{proof} When $m=p$, this follows from \cite[Proposition III.2.3.1]{Cau1} by induction on $(j,\beta)$. The rectangular case is proved in the same manner. It also follows by applying the square case to a square matrix obtained from $M$ by adjoining suitable zero rows or columns.
\end{proof}

\subsection{$\hc$-invariant $q$-quantum matrices.}
\label{subsection:qdet}

Before introducing the class of
$q$-quantum matrices that will be of interest for us, let us give some
notation for the quantum minors of a $q$-quantum matrix and the matrices
obtained by applying the deleting-derivations algorithm.

\begin{nota}
\label{notadet}
Let $M=(x_{\ia}) \in \MmpK$, and let $\delta= [I|\Lambda]_q(M)$ 
be a quantum minor of $M$. 
If $(j,\beta) \in E$, set
\[ 
\delta^{(j,\beta)}:= [I|\Lambda]_q(M^{(j,\beta)}).
\]
For $i\in I$ and $\alpha\in \Lambda$, set 
\[
\delta_{\widehat{i},\widehat{\alpha}}^{(j,\beta)}
:= [I\setminus\{i\} \mid \Lambda\setminus\{\alpha\}]_q(M^{(j,\beta)}),
\]
while 
\[
\delta_{\alpha \rightarrow \gamma}^{(j,\beta)}
:= [I \mid \Lambda\cup \{\gamma\} \setminus\{\alpha\}]_q(M^{(j,\beta)}) \quad (\gamma
\in \onep\setminus \Lambda)
\]
and 
\[
\delta_{i \rightarrow k}^{(j,\beta)}
:= [I\cup \{k\} \setminus\{i\} \mid \Lambda]_q(M^{(j,\beta)}) \quad (k \in \onem\setminus I).
\]
\end{nota}

In \cite{Lau1}, the effect of the deleting-derivations algorithm on quantum
minors was studied. Here we restrict our attention to a particular class of $q$-quantum
matrices.

\begin{defi}
Let $M=(x_{\ia})\in \MmpK$ be a $q$-quantum matrix. The matrix 
$M$ is said to be {\em $\hc$-invariant} if, for any $(j,\beta) \in \Ecirc$ and any quantum minor 
$\delta=[i_1,\dots,i_l |\alpha_1, \dots,\alpha_l]_q(M)$
 of $M$ such that $(i_l,\alpha_l) < (j,\beta)$, we have: 
$$ \delta ^{(j,\beta)^+}=0 \ \implies\ \delta^{(j,\beta)}=0.$$
\end{defi}

This definition is motivated by the fact proved in
\cite[Proposition 3.1.4.5]{Lau} that, if $J$ is an $\hc$-prime ideal
of the algebra $\mmpc$, then the matrix $(X_{i,\alpha} +J)$ (whose
entries are the canonical images of the generators of $\mmpc$ modulo the
$\hc$-prime $J$) is an $\hc$-invariant $q$-quantum matrix. For the convenience
of the reader, another proof of this result is presented in Corollary
\ref{H-invariant}.

 Also, in the case where $\bbK=\bbC$, it was proved in
 \cite[Theorem 5.4]{GLL} that, if $J'$ is a Poisson $\hc$-prime ideal of
 $\OMmpc$, then the matrix $(Y_{i,\alpha} +J')$ over $\OMmpc/J'$ is
 an $\hc$-invariant $1$-quantum matrix.

\subsection{Effect of the deleting-derivations algorithm on quantum minors.}

 The aim of this section is to obtain a characterisation of the quantum minors
 of $M^{(j,\beta)^+}$ that are equal to zero in terms of the quantum minors of
 $M^{(j,\beta)}$ that are equal to zero, for an $\hc$-invariant $q$-quantum matrix $M$. The first step does not require $\hc$-invariance.

\begin{prop}
\label{cor:criterion} 
Let $M=(x_{i,\alpha}) \in \MmpK$ be a $q$-quantum matrix and $r=(j,\beta) \in \Ecirc$. Set $u:=x_{j,\beta}^{(j,\beta)^+}$ and let $\delta=[i_1,\dots,i_l |\alpha_1, \dots,\alpha_l]_q(M)$  be a quantum minor of $M$ with $(i_l,\alpha_l) < (j,\beta)$. Assume that $u \neq 0$, $i_l <j$ and $\alpha_h < \beta < \alpha_{h+1}$ for some $h \in \gc 1 , l\dc$. {\rm(}By convention, $\alpha_{l+1}=p+1$.{\rm)} Then
\begin{eqnarray}
\label{formule4}
\delta^{(j,\beta)^+} u= \delta^{(j,\beta)} u
  +q \delta_{\alpha_h \rightarrow \beta}^{(j,\alpha_h)} 
 x_{j,\alpha_h}^{(j,\alpha_h)} .
  \end{eqnarray}
\end{prop}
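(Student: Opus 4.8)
The plan is to unwind the definition of the deleting-derivations algorithm at step $r=(j,\beta)$ and expand both quantum minors $\delta^{(j,\beta)^+}$ and $\delta^{(j,\beta)}$ by Laplace expansion along the column with index in the range $(\alpha_h,\alpha_{h+1})$. Since $u=x_{j,\beta}^{(j,\beta)^+}\neq 0$, Convention \ref{conv1}(2)(b) tells us that for $i<j$ and $\alpha<\beta$ the entry transforms as $x_{i,\alpha}^{(j,\beta)}=x_{i,\alpha}^{(j,\beta)^+}-x_{i,\beta}^{(j,\beta)^+}u^{-1}x_{j,\alpha}^{(j,\beta)^+}$, while all other entries (in particular those in row $j$ and those in columns $\geq\beta$) are unchanged. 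Because $(i_l,\alpha_l)<(j,\beta)$, $i_l<j$ and $\alpha_h<\beta<\alpha_{h+1}$, every entry $x_{i_s,\alpha_t}^{(j,\beta)^+}$ appearing in $\delta$ with $t\leq h$ lies in the ``changed'' region, and those with $t>h$ are unchanged. So the matrix whose quantum determinant is $\delta^{(j,\beta)}$ differs from the one giving $\delta^{(j,\beta)^+}$ precisely by subtracting, from each entry in columns $\alpha_1,\dots,\alpha_h$, a rank-one correction involving column $\beta$ entries and row $j$ entries.

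The key computational step is to control how the quantum determinant changes under such a rank-one column modification. First I would multiply through by $u$ on the right to clear the inverse, then track the terms. Expanding $[i_1,\dots,i_l\mid\alpha_1,\dots,\alpha_l]_q(M^{(j,\beta)^+})\cdot u$ and using the substitution, one gets $\delta^{(j,\beta)}u$ plus correction terms. The correction terms should telescope/collapse: because each correction involves the column-$\beta$ vector $(x_{i_s,\beta}^{(j,\beta)^+})_s$ and the row-$j$ scalars $(x_{j,\alpha_t}^{(j,\beta)^+})_t$, any term picking up the correction in two or more columns vanishes by the $q$-commutation/antisymmetry properties of quantum minors (two proportional columns give zero), leaving only the single-column corrections. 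Summing the single-column corrections and re-identifying the result as a quantum minor with column $\alpha_t$ replaced by $\beta$ — which, after accounting for the ascending-order reindexing and the power of $-q$ that the Laplace expansion produces — should collapse (again by the two-equal-columns principle, since $\beta$ now sits between $\alpha_h$ and $\alpha_{h+1}$ and all intermediate insertions cancel) to just the $t=h$ term, namely $q\,\delta_{\alpha_h\rightarrow\beta}^{(j,\alpha_h)}x_{j,\alpha_h}^{(j,\alpha_h)}$. Here one must also check that $M^{(j,\beta)^+}$ and $M^{(j,\alpha_h)}$ agree on all entries used in that last expression, which follows from Convention \ref{conv1}(3) and the observation that the steps between $(j,\alpha_h)$ and $(j,\beta)$ only modify entries strictly to the upper-left, combined with $i_l<j$.

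The main obstacle I anticipate is the bookkeeping in the noncommutative setting: quantum minors are not symmetric in their rows/columns, the correction factors $x_{i,\beta}^{(j,\beta)^+}$ and $x_{j,\alpha}^{(j,\beta)^+}$ must be moved past other entries using the $(j,\beta)$-$q$-quantum relations of Lemma \ref{lemjbetasqquantique} (valid for $M^{(j,\beta)}$ and, after a shift, for $M^{(j,\beta)^+}$), and one has to verify that the $q$-powers produced by reordering match the $q$ appearing in formula (\ref{formule4}). In particular, establishing the ``two equal (up to scalar) columns force vanishing'' principle in the form needed here — i.e. for the specific intermediate matrices that are only $(j,\beta)$-$q$-quantum rather than genuinely $q$-quantum — is the delicate point, and is presumably where results from \cite{Lau1} on the effect of the algorithm on quantum minors get invoked.
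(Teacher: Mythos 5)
Your plan has two genuine gaps, both concentrated in the step you yourself flag as "delicate."

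First, the claim that the single-column corrections for $t<h$ "collapse" to the $t=h$ term because "all intermediate insertions cancel" is not correct even in the commutative specialisation. Expanding multilinearly over the modified columns $\alpha_1,\dots,\alpha_h$ produces, for each $t\le h$, a distinct minor $[I\mid \Lambda\cup\{\beta\}\setminus\{\alpha_t\}]$ — these have pairwise different column sets and do not pair up or vanish by any equal-columns phenomenon. The reason the right-hand side of \eqref{formule4} is a \emph{single} term is that its superscript is $(j,\alpha_h)$, not $(j,\beta)^+$: passing from step $(j,\beta)^+$ down to step $(j,\alpha_h)$ is precisely what absorbs the $t<h$ contributions, via a telescoping relationship among the intermediate matrices $M^{(j,\gamma)}$ for $\alpha_h\le\gamma\le\beta$. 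Relatedly, your remark that $M^{(j,\beta)^+}$ and $M^{(j,\alpha_h)}$ "agree on all entries used" in $\delta_{\alpha_h\to\beta}$ is also false in general: the pivots at $(j,\gamma)$ with $\alpha_h<\gamma\le\beta$ do modify entries in rows $<j$ and columns $\alpha_1,\dots,\alpha_{h-1}$ (all of which lie to the left of $\gamma$), so those columns of the minor change; only the columns $\beta,\alpha_{h+1},\dots,\alpha_l$ are untouched. Both of these issues mean the central cancellation in your plan does not occur, so the strategy cannot be carried out as written.

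Second, even the step you would need (killing the $|S|\ge 2$ terms in the multilinear expansion) relies on an "equal columns up to a right scalar vanish" principle with non-central scalar $u^{-1}x_{j,\alpha_t}^{(j,\beta)^+}$. In the quantum minor $\sum_\sigma(-q)^{\ell(\sigma)}x_{i_1,\alpha_{\sigma(1)}}\cdots x_{i_l,\alpha_{\sigma(l)}}$ these scalars sit interleaved among other matrix entries and cannot simply be pulled to the right; whether the $q$-powers from the reordering conspire to give cancellation needs to be proved, not assumed. The paper avoids this entirely: it starts from a ready-made identity of \cite[Proposition 2.2.8]{Lau1} that compares $\delta^{(j,\beta)^+}u$ and $\delta^{(j,\beta)}u$ via a \emph{row} expansion (over $\delta_{i_k\to j}^{(j,\beta)}$), pushes those minors down to step $(j,\alpha_l)$ using \cite[Propositions 4.1.1, 4.1.2]{Cau2}, commutes the two surviving factors using the $(j,\alpha_l)$-$q$-quantum relations, and collapses the resulting sum by a $q$-Laplace expansion; the general case $h<l$ is then handled by induction on $l+1-h$, expanding $\delta^{(j,\beta)^+}$ along its last column. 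The induction is exactly what encodes the passage from $(j,\beta)^+$ to $(j,\alpha_h)$ that your argument tries to shortcut.
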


\begin{proof} We proceed by induction on $l+1-h$. If $l+1-h=1$, then $h=l$ and $\alpha_l < \beta$. 
It follows from \cite[Proposition 2.2.8]{Lau1} that
$$\delta^{(j,\beta)^+}u = \delta^{(j,\beta)}u
  - \sum_{k=1}^l (-q)^{(l+1)-k}\delta_{i_k \rightarrow j}^{(j,\beta)} 
 x_{i_k,\beta}^{(j,\beta)} .$$
Moreover it follows from \cite[Proposition 2.2.8]{Lau1} that
$$\delta_{i_k \rightarrow j}^{(j,\beta)} = \delta_{i_k \rightarrow j}^{(j,\beta-1)} = \dots =\delta_{i_k \rightarrow j}^{(j,\alpha_l)}. $$
Then we deduce from \cite[Propositions 4.1.1 and 4.1.2]{Cau2} that 
$$\delta_{i_k \rightarrow j}^{(j,\beta)} =\delta_{\widehat{i_k}, \widehat{\alpha_l}}^{(j,\alpha_l)} x_{j,\alpha_l}^{(j,\alpha_l)}.$$
As $x_{i_k,\beta}^{(j,\beta)}= x_{i_k,\beta}^{(j,\beta-1)}= \dots = x_{i_k,\beta}^{(j,\alpha_l)}$ by construction, we obtain
$$\delta^{(j,\beta)^+} u= \delta^{(j,\beta)}u
  - \sum_{k=1}^l (-q)^{(l+1)-k}\delta_{\widehat{i_k}, \widehat{\alpha_l}}^{(j,\alpha_l)} x_{j,\alpha_l}^{(j,\alpha_l)}
 x_{i_k,\beta}^{(j,\alpha_l)} .$$
As $i_k < j$, $\alpha_l < \beta$ and the matrix $M^{(j,\alpha_l)}$ is $(j,\alpha_l)$-$q$-quantum, $x_{j,\alpha_l}^{(j,\alpha_l)}$ and $x_{i_k,\beta}^{(j,\alpha_l)}$ commute, so that
 $$\delta^{(j,\beta)^+}u = \delta^{(j,\beta)}u
  - \sum_{k=1}^l (-q)^{(l+1)-k}\delta_{\widehat{i_k}, \widehat{\alpha_l}}^{(j,\alpha_l)} x_{i_k,\beta}^{(j,\alpha_l)} x_{j,\alpha_l}^{(j,\alpha_l)}  .$$
Hence, by using a $q$-Laplace expansion \cite[Corollary A.5, Equation (A.6)]{GoLen1}, we obtain
$$\delta^{(j,\beta)^+}u = \delta^{(j,\beta)}u
  +q \delta_{\alpha_l \rightarrow \beta}^{(j,\alpha_l)} x_{j,\alpha_l}^{(j,\alpha_l)},$$
as desired.

Now let $l+1-h>1$, and assume the result holds for smaller values of $l+1-h$. Note that $\beta< \alpha_{h+1}\le \alpha_l$. Expand the quantum minor $\delta^{(j,\beta)^+} $ along its last column \cite[Corollary A.5, Equation (A.5)]{GoLen1}, to get
$$\delta^{(j,\beta)^+} = \sum_{k=1}^l (-q)^{k-l} x_{i_k,\alpha_l}^{(j,\beta)^+} \delta_{\widehat{i_k}, \widehat{\alpha_l}}^{(j,\beta)^+}.$$
The value corresponding to $l+1-h$ for the minors $\delta_{\widehat{i_k}, \widehat{\alpha_l}}^{(j,\beta)^+}$ is $l-h$, and so the induction hypothesis applies. We obtain
$$\delta_{\widehat{i_k}, \widehat{\alpha_l}}^{(j,\beta)^+} u = \delta_{\widehat{i_k}, \widehat{\alpha_l}}^{(j,\beta)} u+ q \delta^{(j,\alpha_h)}_{\substack{ \widehat{i_k}, \widehat{\alpha_l}\\ \alpha_h \rightarrow \beta }} x_{j,\alpha_h}^{(j,\alpha_h)}$$
for $k\in\onel$. As  
$x_{i_k,\alpha_l}^{(j,\beta)^+}
= x_{i_k,\alpha_l}^{(j,\beta)}= \dots 
= x_{i_k,\alpha_l}^{(j,\alpha_h)}$ by construction,  we obtain 
\begin{align*}
\delta^{(j,\beta)^+} u &= \sum_{k=1}^l (-q)^{k-l} x_{i_k,\alpha_l}^{(j,\beta)} \delta_{\widehat{i_k}, \widehat{\alpha_l}}^{(j,\beta)} u +q \sum_{k=1}^l (-q)^{k-l} x_{i_k,\alpha_l}^{(j,\alpha_h)}  \delta^{(j,\alpha_h)}_{\substack{ \widehat{i_k}, \widehat{\alpha_l}\\ \alpha_h \rightarrow \beta }} x_{j,\alpha_h}^{(j,\alpha_h)} \\
 &= \delta^{(j,\beta)} u
  + q \delta_{\alpha_h \rightarrow \beta}^{(j,\alpha_h)}
  x_{j,\alpha_h}^{(j,\alpha_h)} ,
\end{align*}
by two final $q$-Laplace expansions \cite[Corollary A.5, Equation (A.5)]{GoLen1}. This concludes the induction step.
\end{proof}

\begin{cor}
\label{H-invariant}
Let $J$ be an $\hc$-prime ideal of the algebra $R=\mmpc$. Then the matrix $(X_{i,\alpha} +J) \in \mc_{m,p}(R/J)$ is an $\hc$-invariant $q$-quantum  matrix.
\end{cor}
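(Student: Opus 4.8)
The plan is to verify directly that $M=(X_{i,\alpha}+J)$, regarded as a matrix over $R/J$, is a $q$-quantum matrix, and then to check the $\hc$-invariance condition step by step, with Proposition \ref{cor:criterion} as the engine. That $M$ is $q$-quantum is immediate: the defining relations of $R=\mmpc$ pass to the quotient $R/J$. Since $q$ is not a root of unity, $J$ is completely prime, so $R/J$ is a domain; letting $K$ denote its skew field of fractions, we have $M\in\mc_{m,p}(R/J)\subseteq\mc_{m,p}(K)$, so the deleting-derivations algorithm of Convention \ref{conv1} applies. All the substance is in the $\hc$-invariance: fix $r=(j,\beta)\in\Ecirc$ and a quantum minor $\delta=[i_1,\dots,i_l\mid\alpha_1,\dots,\alpha_l]_q(M)$ with $(i_l,\alpha_l)<(j,\beta)$, assume $\delta^{(r^+)}=0$, and deduce $\delta^{(r)}=0$.

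First I would dispose of the cases in which $\delta^{(r)}$ coincides with $\delta^{(r^+)}$ and hence vanishes trivially. If the pivot $u:=x^{(r^+)}_{j,\beta}$ is zero, then $M^{(r)}=M^{(r^+)}$ by Convention \ref{conv1}(2)(a). If no column $\alpha_s$ of $\delta$ is strictly less than $\beta$, then none of the entries of $\delta$ lies in the region (rows $<j$, columns $<\beta$) altered in passing from $M^{(r^+)}$ to $M^{(r)}$. If $\beta\in\{\alpha_1,\dots,\alpha_l\}$, or if $i_l=j$, then on the rows and columns of $\delta$ this alteration is a composition of elementary operations that add multiples of the unaltered column $\beta$, respectively of the unaltered row $j$, to the remaining columns, respectively rows, of the minor; a $q$-Laplace expansion using the commutation relations supplied by Lemma \ref{lemjbetasqquantique}, in the style of the proof of Proposition \ref{cor:criterion}, again gives $\delta^{(r)}=\delta^{(r^+)}$. (One checks that $\beta\in\{\alpha_1,\dots,\alpha_l\}$ forces $i_l<j$ and that $i_l=j$ forces $\beta\notin\{\alpha_1,\dots,\alpha_l\}$, so these two subcases are disjoint.) This leaves the case $u\neq0$, $i_l<j$, $\beta\notin\{\alpha_1,\dots,\alpha_l\}$, and $h:=\#\{s:\alpha_s<\beta\}\geq1$, so that $\alpha_h<\beta<\alpha_{h+1}$ with the convention $\alpha_{l+1}=p+1$.

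Here Proposition \ref{cor:criterion} applies, and since $\delta^{(r^+)}=0$ and $u$ is a unit of $K$ it gives
$$\delta^{(r)}\,u=-\,q\,\delta^{(j,\alpha_h)}_{\alpha_h\rightarrow\beta}\,x^{(j,\alpha_h)}_{j,\alpha_h},$$
so everything comes down to showing that the right-hand side vanishes. I expect this ``extra term'' to be the main obstacle, and I would handle it by induction on $h$. The minor $\delta_{\alpha_h\rightarrow\beta}$ has the same rows as $\delta$ — in particular its last row is still $i_l<j$ — it contains $\beta$ among its columns, and only $h-1$ of its columns lie to the left of $\alpha_h$. Hence every instance of the $\hc$-invariance condition that arises while running the algorithm from step $(j,\beta)$ down to step $(j,\alpha_h)$ involves a minor with strictly fewer than $h$ columns to the left of the relevant pivot column, and so is covered by the inductive hypothesis or by the easy cases above; combining these with a further $q$-Laplace expansion and the identities of \cite{Cau2} and \cite{Lau1} already invoked for Proposition \ref{cor:criterion} (by which one re-expresses $x^{(j,\alpha_h)}_{j,\alpha_h}\,\delta^{(j,\alpha_h)}_{\alpha_h\rightarrow\beta}$ through the $(l+1)\times(l+1)$ minor $[i_1,\dots,i_l,j\mid\alpha_1,\dots,\alpha_h,\beta,\alpha_{h+1},\dots,\alpha_l]_q$ of $M$), one reduces the extra term to a configuration already settled, closing the induction. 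The delicate point is precisely the bookkeeping in this last induction — arranging that each minor produced by Proposition \ref{cor:criterion} lands strictly lower in the chosen monovariant; the rest is routine.
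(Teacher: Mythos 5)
Your setup — passing the defining relations of $R$ to $R/J$, using complete primeness to land in a skew field $K$, and invoking Proposition~\ref{cor:criterion} to reduce to the case $u\neq 0$, $i_l<j$, $\alpha_h<\beta<\alpha_{h+1}$ — is sound and matches the paper. The gap is in the handling of that main case, and it is a genuine one.

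You correctly reduce to showing that $\delta^{(j,\alpha_h)}_{\alpha_h\rightarrow\beta}\,x^{(j,\alpha_h)}_{j,\alpha_h}=0$, but your proposed ``induction on $h$'' has no base and no mechanism: the only vanishing you are given is $\delta^{(j,\beta)^+}=0$, and there is no supplied reason why the $(l+1)\times(l+1)$ minor you want to reduce to, or any of its descendants under deleting-derivations, should vanish. More importantly, nothing in your argument uses that $J$ is an $\hc$-prime, as opposed to an arbitrary completely prime ideal. Since the conclusion (that $(X_{i,\alpha}+J)$ is $\hc$-\emph{invariant}) is false for a general completely prime $J$, any correct proof must exploit $\hc$-primeness somewhere, and your sketch does not.

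The paper's argument uses a quite different mechanism at exactly this point, and it is where $\hc$-primeness enters. After multiplying through by $u$ one has
\[
0=\delta^{(j,\beta)}u + q\,\delta^{(j,\alpha_h)}_{\alpha_h\rightarrow\beta}\,t_{j,\alpha_h},
\]
and the two terms are then separated not by an induction but by a \emph{degree-in-$u$} argument. The deleting-derivations algorithm (for $\hc$-primes) embeds $R/J$ into a quantum torus on the $t_{k,\gamma}$, and each $x^{(j,\beta)}_{k,\gamma}$ (resp.\ $x^{(j,\alpha_h)}_{k,\gamma}$) with $(k,\gamma)<(j,\beta)$ equals $t_{k,\gamma}$ plus an element of the subalgebra $L=\bbK\langle t_{k,\gamma}^{\pm1}\mid (k,\gamma)<(j,\beta),\ t_{k,\gamma}\neq0\rangle$. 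Hence both $\delta^{(j,\beta)}$ and $\delta^{(j,\alpha_h)}_{\alpha_h\rightarrow\beta}\,t_{j,\alpha_h}$ lie in $L$. The crucial input — and this is exactly the structural fact about $\hc$-primes that your proof is missing — is the $\bbK$-version of \cite[Theorem 3.7]{Lau2}, which asserts that the powers of $u=t_{j,\beta}$ are linearly independent over $L$. Reading the displayed equation as a relation in $L[u]$ and comparing coefficients of $u^1$ and $u^0$ gives $\delta^{(j,\beta)}=0$ (and, as a by-product, $\delta^{(j,\alpha_h)}_{\alpha_h\rightarrow\beta}\,t_{j,\alpha_h}=0$) in one stroke. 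You should replace your inductive scheme for the ``extra term'' with this linear-independence argument; without it the claimed vanishing has no justification.
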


\begin{proof}
Set $x_{i,\alpha}=X_{i,\alpha} +J$ for all $(i,\alpha)$ and $M=(x_{i,\alpha}) \in \mathcal{M}_{m,p}(F_J)$, where $F_J$ denotes the skew-field of fractions of the noetherian domain $R/J$. Clearly, $M$ is a $q$-quantum matrix. 
Let $(j,\beta) \in \Ecirc$ and  let 
$\delta= [i_1,\dots,i_l|\alpha_1, \dots,\alpha_l]_q(M)$ be a quantum minor of $M$ such
that
$(i_l,\alpha_l) < (j,\beta)$. Assume that $ \delta ^{(j,\beta)^+}=0$. We need to
prove that $\delta^{(j,\beta)}=0$.  If $ \delta ^{(j,\beta)}= \delta
^{(j,\beta)^+}$,  then there is nothing to do; so  assume that 
$ \delta ^{(j,\beta)} \neq \delta ^{(j,\beta)^+}$. 
In this case, it follows from 
\cite[Proposition 2.2.8]{Lau1} that 
$u=t_{j,\beta}=x_{j,\beta}^{(j,\beta)^+} \neq 0$ and 
$i_l <j$ while 
$\alpha_h < \beta < \alpha_{h+1}$ for some $h \in \onel$. 
Thus,  Proposition \ref{cor:criterion} implies that  
$$
\delta^{(j,\beta)^+} u = \delta^{(j,\beta)} u 
  + q \delta_{\alpha_h \rightarrow \beta}^{(j,\alpha_h)}\, 
 x_{j,\alpha_h}^{(j,\alpha_h)} .
  $$
Hence,   
  $$
0= \delta^{(j,\beta)} u
  + q \delta_{\alpha_h \rightarrow \beta}^{(j,\alpha_h)} \,
 t_{j,\alpha_h} .
  $$

In order to conclude, observe that each 
$x_{\ia}^{(j,\beta)} =t_{\ia} +Q_{\ia}$
where the element $Q_{\ia}$ lies in the algebra
$$L := \bbK\langle t_{k,\gamma}^{\pm 1} \mid (k,\gamma) < (j,\beta) \text{\ and\ } t_{k,\gamma} \neq 0 \rangle,$$
and that each $x_{\ia}^{(j,\alpha_h)} =t_{\ia} +Q'_{\ia}$ 
where 
$$Q'_{\ia} \in \bbK\langle t_{k,\gamma}^{\pm 1} \mid (k,\gamma) < (j,\alpha_h) \text{\ and\ } t_{k,\gamma} \neq 0 \rangle \subseteq L.$$ 
(This can be proved by an easy induction, similar to 
\cite[Lemme 3.5.4]{Lau1}.)
Hence, $\delta^{(j,\beta)}$  and $\delta_{\alpha_h \rightarrow
\beta}^{(j,\alpha_h)}\, 
 t_{j,\alpha_h}$ both 
  belong to $L$. 
Finally, we note that \cite[Theorem 3.7]{Lau2}, which was proved for the case $\bbK=\bbC$, holds for the general coefficient field $\bbK$ (with the same proof). This result implies that the powers of $u=t_{j,\beta}$ are linearly independent over $L$. It follows that   
  $\delta^{(j,\beta)}=0$, as desired.
\end{proof}

We now restrict our attention to the case where $M$ is an $\hc$-invariant $q$-quantum matrix. In this case, we deduce from \cite[Proposition 2.2.8]{Lau1} and Proposition \ref{cor:criterion} the following characterisation of the quantum minors of $M^{(j,\beta)^+}$ that are equal to zero in terms of the quantum minors of $M^{(j,\beta)}$ that are equal to zero.

\begin{prop}
\label{prop:criterion}
Let $M=(x_{i,\alpha}) \in \MmpK$ be an $\hc$-invariant $q$-quantum matrix and $(j,\beta) \in \Ecirc$. Set $u:=x_{j,\beta}^{(j,\beta)^+}$. Let $\delta=[i_1,\dots,i_l | \alpha_1, \dots,\alpha_l]_q(M)$  be a quantum minor of $M$ with $(i_l,\alpha_l) < (j,\beta)$.

\begin{enumerate}
\item Assume that $u=0$. Then $\delta^{(j,\beta)^+}=0 $ if and only if 
$\delta^{(j,\beta)}=0$.
\item Assume that $u\neq 0$. If $i_l=j$, or if $\beta\in \{\alpha_1, \dots,\alpha_l \}$,
 or if $\beta < \alpha_1$, then $\delta^{(j,\beta)^+}=0$ if and only if 
 $\delta^{(j,\beta)}=0$.
\item Assume that 
$u \neq 0$, $i_l < j$ and $\alpha_{h} < \beta < \alpha_{h+1}$ 
for some $h \in \gc 1,  l\dc$.
Then $\delta^{(j,\beta)^+}=0$ if and only if $\delta^{(j,\beta)}=0$ and 
either $\delta_{\alpha_h \rightarrow \beta}^{(j,\alpha_h)} =0$ 
or $ x_{j,\alpha_h}^{(j,\alpha_h)}=0$.
\end{enumerate}
\end{prop}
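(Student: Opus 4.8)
The plan is to reduce the statement to a short case analysis in which all the substantive work has already been done: Proposition~\ref{cor:criterion} supplies the key identity, \cite[Proposition 2.2.8]{Lau1} (as already used in the proof of Corollary~\ref{H-invariant}) pins down when the deleting-derivations step changes $\delta$, the definition of $\hc$-invariance provides the implication $\delta^{(j,\beta)^+}=0\Rightarrow\delta^{(j,\beta)}=0$, and the fact that $K$ is a skew-field (so has no zero divisors), together with $q\neq0$, lets me cancel. First I would note that the three cases are exhaustive: given $(i_l,\alpha_l)<(j,\beta)$, either $u=0$, or $u\neq0$ and $i_l=j$, or $u\neq0$ and $i_l<j$; and in the last situation exactly one of $\beta<\alpha_1$, $\beta\in\{\alpha_1,\dots,\alpha_l\}$, or $\alpha_h<\beta<\alpha_{h+1}$ for a unique $h\in\onel$ holds (recall $\alpha_{l+1}=p+1$ by convention and $\beta\le p$ since $(j,\beta)\in\Ecirc$). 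So it suffices to treat each case.

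\emph{Cases (1) and (2).} If $u=0$, then by Convention~\ref{conv1} the deleting-derivations step at $r=(j,\beta)$ does nothing, i.e.\ $M^{(j,\beta)}=M^{(j,\beta)^+}$, hence $\delta^{(j,\beta)}=\delta^{(j,\beta)^+}$ and the equivalence is trivial. If $u\neq0$ and either $i_l=j$, or $\beta\in\{\alpha_1,\dots,\alpha_l\}$, or $\beta<\alpha_1$, then I would invoke the consequence of \cite[Proposition 2.2.8]{Lau1} recorded in the proof of Corollary~\ref{H-invariant}: whenever $\delta^{(j,\beta)}\neq\delta^{(j,\beta)^+}$ one necessarily has $u\neq0$, $i_l<j$, and $\alpha_h<\beta<\alpha_{h+1}$ for some $h\in\onel$. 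Taking the contrapositive, in each of these sub-cases $\delta^{(j,\beta)}=\delta^{(j,\beta)^+}$, so the equivalence is again trivial.

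\emph{Case (3).} This is the only case with content. Here Proposition~\ref{cor:criterion} applies and gives $\delta^{(j,\beta)^+} u = \delta^{(j,\beta)} u + q\,\delta_{\alpha_h\rightarrow\beta}^{(j,\alpha_h)}\, x_{j,\alpha_h}^{(j,\alpha_h)}$. For the ``if'' direction: assuming $\delta^{(j,\beta)}=0$ and either $\delta_{\alpha_h\rightarrow\beta}^{(j,\alpha_h)}=0$ or $x_{j,\alpha_h}^{(j,\alpha_h)}=0$, the right-hand side vanishes, so $\delta^{(j,\beta)^+}u=0$; since $u\neq0$ and $K$ is a skew-field, $\delta^{(j,\beta)^+}=0$. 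For the ``only if'' direction: assuming $\delta^{(j,\beta)^+}=0$, the hypothesis $(i_l,\alpha_l)<(j,\beta)$ lets me apply the defining property of the $\hc$-invariant matrix $M$ to get $\delta^{(j,\beta)}=0$; substituting back into the identity yields $q\,\delta_{\alpha_h\rightarrow\beta}^{(j,\alpha_h)}\, x_{j,\alpha_h}^{(j,\alpha_h)}=0$, and since $q\neq0$ and $K$ has no zero divisors, one of the two factors must vanish.

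I do not expect a genuine obstacle. The only point that demands a little care is Case (2): one must verify that the three listed conditions constitute precisely the negation (under the standing assumptions $u\neq0$ and $(i_l,\alpha_l)<(j,\beta)$) of the hypothesis of Case (3), so that the contrapositive of \cite[Proposition 2.2.8]{Lau1} applies cleanly. Alternatively one could unwind \cite[Proposition 2.2.8]{Lau1} directly and check that the correction terms vanish in each sub-case, but routing through the contrapositive mirrors the argument in Corollary~\ref{H-invariant} and is shorter.
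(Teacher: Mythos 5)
Your proof is correct and takes essentially the same route as the paper: the paper's own argument is the terse statement that (1) and (2) follow from \cite[Proposition 2.2.8]{Lau1}, while (3) follows from Proposition~\ref{cor:criterion} together with $\hc$-invariance, and your case analysis, contrapositive argument, and cancellation via $u\neq0$, $q\neq0$ in the skew-field $K$ simply spell out those same steps. The only cosmetic difference is that you derive case (1) directly from Convention~\ref{conv1} rather than citing \cite[Proposition 2.2.8]{Lau1}, which is an equally valid (and more elementary) route.
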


\begin{proof} (1) and (2) follow from \cite[Proposition 2.2.8]{Lau1}. 
(3) follows from the previous proposition and the fact that $M$ is 
$\hc$-invariant.
\end{proof}


\section{Poisson $\hc$-primes of $\OMmpc$ versus $\hc$-primes in $\mmpc$.}
\label{section:main result}

We are now in position to study the quantum minors that belong to a 
given $\hc$-prime in the algebra $R= \mmpc$.

Associated to each $m\times p$ Cauchon diagram $C$ is an $\hc$-prime $J_C$ of $R$ determined as follows \cite{Cau2}. Via the deleting-derivations algorithm, $R$ is connected to a quantum affine space $\overline{R}$ with generators $T_{\ia}$ for $(\ia)\in \onem\times\onep$, and there is a canonical embedding $\varphi: \Spec(R) \rightarrow \Spec(\overline{R})$. The set $\{ T_{\ia} \mid (\ia)\in C \}$ generates an $\hc$-prime ideal of $\overline{R}$, and $J_C$ is the inverse image of this $\hc$-prime under $\varphi$. Moreover, the rule $C\mapsto J_C$ gives a bijection from $\mathcal{C}_{m,p}$ onto $\HSpec(R)$. (See \cite[Section 2]{Lau2} for a summary of the details.) It is the reverse connection -- going from $J_C$ back to $C$ -- that is important for the proof of the next theorem. To describe it, set $x_{i,\alpha}:= X_{i,\alpha}+J_C$ for $(\ia)\in \onem\times \onep$, consider the $q$-quantum matrix
$M_C:=(x_{i,\alpha}) \in \mc_{m,p}(R/J_C)$, and apply the deleting-derivations algorithm to $M_C$. For all $(\ia)$, (the $\bbK$-version of) \cite[Theorem 3.7]{Lau2} shows that
\begin{equation} \label{JCC}
x_{\ia}^{(1,2)}=0 \ \iff\ (\ia)\in C.
\end{equation}
Consequently, $x_{\ia}^{(\ia)}=0 \iff (\ia)\in C$ (recall that $x_{\ia}^{(1,2)}= x_{\ia}^{(\ia)}$).

Associated to $C$ is also a Poisson $\hc$-prime $J'_C$ of $\OMmpc$ which can be described in an analogous fashion. However, it is given in \cite[Section 5]{GLL} using the (Poisson) restoration algorithm (the inverse of the deleting-derivations algorithm), in the following way. First, let $A_C$ denote the polynomial ring
$$\bbC[ t_{\ia} \mid (\ia)\in (\onem\times\onep) \setminus C ],$$
and set $t_{\ia}=0 \in A_C$ for $(\ia)\in C$. A specific Poisson bracket is defined on $A_C$, 
and the restoration algorithm is applied to the matrix $(t_{\ia}) \in \mc_{m,p}(A_C)$. This leads to a Poisson algebra $A'_C := A_C^{(m,p+1)}$ with generators $y_{\ia}$, such that there is a Poisson algebra homomorphism $\varphi_C: \OMmpc \rightarrow A'_C$ sending $Y_{\ia} \mapsto y_{\ia}$ for all $(\ia)$. The Poisson $\hc$-prime ideal $J'_C$ associated to $C$ is defined as $J'_C := \ker(\varphi_C)$, and we identify $A'_C$ with $\OMmpc/J'_C$. The rule $C\mapsto J'_C$ defines a bijection from $\mathcal{C}_{m,p}$ onto $\hc$-$\mathrm{PSpec}(\OMmpc)$.

Application of the (Poisson) deleting-derivations algorithm \cite[Convention B.2]{GLL} to the matrix $M'_C := (y_{\ia}) \in \mc_{m,p}(A'_C)$ leads back to the matrix $(t_{\ia})$, so that $t_{\ia}= y_{\ia}^{(1,2)}$ for all $(\ia) \in \onem\times\onep$. In view of the definition of the $t_{\ia}$, we thus have
\begin{equation} \label{J'CC}
y_{\ia}^{(1,2)}=0 \ \iff\ (\ia)\in C.
\end{equation}
Consequently, just as with \eqref{JCC}, $y_{\ia}^{(\ia)}=0 \iff (\ia)\in C$.

\begin{theo}
\label{theo:comparison}
Let $C$ be an $m\times p$ Cauchon diagram. Let $J_C$ 
be the corresponding $\hc$-prime in $\mmpc$, and $J_C'$ the corresponding Poisson 
$\hc$-prime in $\OMmpc$.

Then, a quantum minor $[I|\Lambda]_q$ belongs to $J_C$ 
if and only if the corresponding minor $[I|\Lambda]$ belongs to $J_C'$.
\end{theo}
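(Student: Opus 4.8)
The plan is to prove the equivalence by simultaneously tracking the vanishing of quantum minors under the deleting-derivations algorithm applied to $M_C$ and the vanishing of ordinary minors under the Poisson deleting-derivations algorithm applied to $M'_C$. The key observation is that both algorithms have the same combinatorial shape, and that the analogue of Proposition~\ref{prop:criterion} holds on the Poisson side as well (this is exactly what \cite[Theorem 5.4]{GLL} provides, since $M'_C$ is an $\hc$-invariant $1$-quantum matrix while $M_C$ is an $\hc$-invariant $q$-quantum matrix by Corollary~\ref{H-invariant}). So the strategy is a downward induction on the step $r=(j,\beta)\in E$, showing that for every quantum minor $\delta = [I|\Lambda]_q(M_C)$ with associated ordinary minor $\delta' = [I|\Lambda](M'_C)$, one has $\delta^{(r)}=0 \iff (\delta')^{(r)}=0$.

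First I would set up the base case $r=(m,p+1)$: here $M_C^{(m,p+1)}=M_C$ has entries $x_{\ia}=X_{\ia}+J_C$, and $(M'_C)^{(m,p+1)}=M'_C$ has entries $y_{\ia}=Y_{\ia}+J'_C$. The claim $[I|\Lambda]_q \in J_C \iff [I|\Lambda]\in J'_C$ is precisely $\delta^{(m,p+1)}=0 \iff (\delta')^{(m,p+1)}=0$, so the theorem is the base case read at the top of the induction — meaning I actually want to run the induction the other way. Concretely, I would prove by \emph{upward} induction on $r$ (from $r=(1,2)$ up to $r=(m,p+1)$) the statement $P(r)$: for all index sets $I,\Lambda$ of equal size, $\delta^{(r)}=0 \iff (\delta')^{(r)}=0$. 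The case $r=(1,2)$ is the anchor: there $\delta^{(1,2)}$ is a quantum minor of the quantum affine space matrix $(t_{\ia})$, which vanishes iff some $t_{\ia}$ with $(\ia)\in I\times\Lambda$ equals zero (a quantum minor of a quantum affine space is a monomial up to a nonzero scalar, hence vanishes iff one of its entries does), iff $I\times\Lambda$ meets $C$; by \eqref{J'CC} the same description holds for $(\delta')^{(1,2)}$, so $P(1,2)$ holds.

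The inductive step $P(r)\Rightarrow P(r^+)$ is where the real work lies, and it is the main obstacle. Given $\delta$ a quantum minor of $M_C$, I must relate $\delta^{(r^+)}$ to $\delta^{(r)}$ and similarly on the Poisson side. If $(i_l,\alpha_l)\not<(j,\beta)$ for $r=(j,\beta)$, then the relevant rows/columns are not touched and one needs a separate (easier) bookkeeping argument that $\delta^{(r^+)}$ and $\delta^{(r)}$ either coincide or differ only in a controlled way — in fact the standard analysis of the deleting-derivations algorithm on minors (cf.\ \cite[Proposition 2.2.8]{Lau1}) handles exactly the cases where $(i_l,\alpha_l) \ge (j,\beta)$ as well, and the Poisson analogue is in \cite[Appendix B]{GLL}. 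When $(i_l,\alpha_l)<(j,\beta)$, Proposition~\ref{prop:criterion} gives a trichotomy: in cases (1) and (2) we get $\delta^{(r^+)}=0 \iff \delta^{(r)}=0$ directly, and the same clauses of the Poisson version give $(\delta')^{(r^+)}=0 \iff (\delta')^{(r)}=0$, so combined with $P(r)$ (applied to $\delta$) we conclude $P(r^+)$ for this $\delta$; in case (3), Proposition~\ref{prop:criterion}(3) says $\delta^{(r^+)}=0 \iff [\delta^{(r)}=0$ and ($\delta_{\alpha_h\to\beta}^{(j,\alpha_h)}=0$ or $x_{j,\alpha_h}^{(j,\alpha_h)}=0)]$, and the Poisson analogue gives the parallel statement for $(\delta')$. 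So I must also know that $\delta_{\alpha_h\to\beta}^{(j,\alpha_h)}=0 \iff (\delta')_{\alpha_h\to\beta}^{(j,\alpha_h)}=0$ and $x_{j,\alpha_h}^{(j,\alpha_h)}=0 \iff y_{j,\alpha_h}^{(j,\alpha_h)}=0$ — but $(j,\alpha_h)<(j,\beta)=r$, so these are instances of $P$ at an earlier step, already available by the induction hypothesis. Thus the trichotomy is exactly engineered so that case (3) reduces the vanishing of $\delta^{(r^+)}$ to vanishing statements at step $r$ or earlier, all covered by the induction hypothesis, and matched clause-by-clause with the Poisson side. Once $P(m,p+1)$ is established the theorem follows. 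The delicate points to get right are: (i) making sure the Poisson version of Proposition~\ref{prop:criterion} is genuinely available — one should cite \cite[Theorem 5.4]{GLL} together with the Poisson analogue of the argument in Proposition~\ref{cor:criterion} and \cite[Appendix B]{GLL}; (ii) the case analysis for $(i_l,\alpha_l)\ge(j,\beta)$, which requires invoking \cite[Proposition 2.2.8]{Lau1} and its Poisson counterpart and checking the auxiliary minors that appear are again at earlier steps; and (iii) the anchor computation that quantum minors of quantum affine space are (scalar times) monomials. None of these is deep, but assembling the induction so that every vanishing condition invoked sits at step $r$ or earlier is the crux.
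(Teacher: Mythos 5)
Your overall plan — upward induction on the step $(j,\beta)\in E$, matching Proposition~\ref{prop:criterion} clause-by-clause with its Poisson analogue in \cite{GLL}, and observing that case~(3) only invokes vanishing at earlier steps — is exactly the paper's approach. But two of your concrete steps contain genuine errors, and they are not cosmetic.

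First, the base-case claim is false. A quantum minor of a quantum affine space matrix $(t_{\ia})$ is not a monomial up to a nonzero scalar, and it does not vanish as soon as one entry in $I\times\Lambda$ vanishes. For instance, with $I=\Lambda=\{1,2\}$ and $t_{1,1}=0$ but $t_{1,2},t_{2,1},t_{2,2}\neq 0$, the quantum minor is $t_{1,1}t_{2,2}-q\,t_{1,2}t_{2,1}=-q\,t_{1,2}t_{2,1}\neq 0$. The correct characterization (vanishing of every summand over $\sigma$, hence a K\"onig/Hall-type rectangle-of-zeros condition) is more involved and is not what you wrote.

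Second, and more structurally, your induction hypothesis $P(r)$ is too strong: you want the equivalence for \emph{all} index pairs $(I,\Lambda)$ at every step $r$, which forces you both to prove the difficult base case above and to handle the ``bookkeeping'' case $(i_l,\alpha_l)\ge (j,\beta)$ in the inductive step, which you leave vague. The paper avoids both problems by a sharper formulation: it proves by induction on $(j,\beta)$ that the equivalence holds for minors with $(i_l,\alpha_l) < (j,\beta)$ only. With that restriction, the base case $(j,\beta)=(1,2)$ involves just the $1\times 1$ minor $x_{1,1}$ versus $y_{1,1}$ and is immediate from \eqref{JCC} and \eqref{J'CC}; the inductive step need only add the newly-eligible minors with $(i_l,\alpha_l)=(j,\beta)$, handled by factoring through the pivot via \cite[Proposition 4.1.2]{Cau2} and \cite[Proposition 3.10]{GLL}, and then the three cases of Proposition~\ref{prop:criterion} for $(i_l,\alpha_l)<(j,\beta)$; and the restriction is vacuous at the top step $(m,p+1)$, so the theorem follows. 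You should adopt this restricted induction hypothesis — it is precisely the device that makes both of your problem spots disappear.
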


\begin{proof} Define the matrix
$M_C:=(x_{i,\alpha})$ as above, and observe that a quantum minor $\delta$ 
belongs to $J_C$ if and only 
if it corresponds to a quantum minor of $M_C$ that is equal to zero. 
Moreover, it follows from Corollary \ref{H-invariant} that 
$M_C$ is an $\hc$-invariant $q$-quantum matrix. 
In particular, Proposition \ref{prop:criterion}
applies to $M_C$.

Similarly, define the matrix $M'_C:=(y_{i,\alpha})$ as above, and observe
that a minor $\delta$ belongs to $J'_C$ if and only if it corresponds to a
minor of $M'_C$ that is equal to zero. Moreover,
\cite[Proposition 3.15]{GLL}  applies
to $M'_C$.

So it is sufficient to prove that a quantum minor of 
$M_C$ is equal to zero if and only if the corresponding minor 
of $M'_C$ is equal to zero. 

We prove, by induction on $(j,\beta) \in E$, that a quantum minor
$\delta^{(j,\beta)} :=[I|\Lambda]_q\left( x_{i,\alpha}^{(j,\beta)}
\right)$ of
$M_C^{(j,\beta)}$, where $I=\{i_1,\dots,i_l\}$ and 
$\Lambda=\{\alpha_1, \dots,\alpha_l\}$ with 
$(i_l,\alpha_l) < (j,\beta)$, 
is equal to zero if and
only if the corresponding minor 
$\Delta^{(j,\beta)} :=
[I|\Lambda]\left(
y_{i,\alpha}^{(j,\beta)} \right)$ of ${M'_C}^{(j,\beta)}$ is equal to zero. 

The case $(j,\beta)=(1,2)$ follows from the fact that we start with the same
Cauchon diagram. Then the induction step follows easily from the fact that one
can apply Proposition \ref{prop:criterion} to $M_C$ and \cite[Proposition
3.15]{GLL} to $M'_C$. 
Note that the induction step mimics the proof of
\cite[Theorem 3.16]{GLL}. However, for the convenience of the reader we
provide details.

We prove that 
$\delta^{(j,\beta)}=0 $ implies that $\Delta^{(j,\beta)}=0$. The reverse implication is proved similarly.

Assume first that  $(j,\beta)=(1,2)$. In this case, we have to prove that if 
$x_{1,1}^{(1,2)}=0$, then  $y_{1,1}^{(1,2)}=0$. 
Assume that  $x_{1,1}^{(1,2)}=0$. 
Then $(1,1) \in C$ by \eqref{JCC}, whence it follows from \eqref{J'CC} that 
$y_{1,1}^{(1,2)}=0$, as desired.

Now let $(j,\beta) \in \Ecirc$, 
and assume the result proved at step $(j,\beta)$. Suppose that
$\delta^{(j,\beta)^+}=0
$. In order to prove that  
$\Delta^{(j,\beta)^+}=0 $, we consider 
several cases. Keep in mind that $t_{j,\beta}= x^{(j,\beta)}_{j,\beta}= x^{(j,\beta)^+}_{j,\beta}$.

$\bullet$ Assume that $(i_l,\alpha_l) = (j,\beta)$. We distinguish between two cases.

$\bullet \bullet$ If  $t_{j,\beta}=x_{j,\beta}^{(j,\beta)}=0$, then $(j,\beta) \in C$ by \eqref{JCC}. Hence, it follows from \eqref{J'CC} that $y_{j,\beta}^{(j,\beta)}=0$, 
and so we deduce from \cite[Proposition 3.10]{GLL} that $\Delta^{(j,\beta)^+} =0$, as required.

$\bullet \bullet$ If  $t_{j,\beta}=x_{j,\beta}^{(j,\beta)} \neq 0$,
 then it follows from \cite[Proposition 4.1.2]{Cau2} that 
$0=\delta^{(j,\beta)^+} 
= \delta_{\widehat{j},\widehat{\beta}}^{(j,\beta)} x_{j,\beta}^{(j,\beta)}$, 
so that $\delta_{\widehat{j},\widehat{\beta}}^{(j,\beta)}=0$ . 
Hence, it follows from the induction hypothesis that 
$\Delta_{\widehat{j},\widehat{\beta}}^{(j,\beta)}=0$. 
As $\Delta^{(j,\beta)^+} 
= \Delta_{\widehat{j},\widehat{\beta}}^{(j,\beta)} y_{j,\beta}^{(j,\beta)}$, 
by \cite[Proposition 3.10]{GLL}, 
it follows that 
$\Delta^{(j,\beta)^+} =0$, as required.

$\bullet$ Assume that $(i_l,\alpha_l) < (j,\beta)$. 
We distinguish between three cases 
(corresponding to the three cases of Proposition 
\ref{prop:criterion}).

$\bullet \bullet$  Assume that $t_{j,\beta}= 0$. 
As we are assuming that $\delta^{(j,\beta)^+}=0 $, it follows from 
Proposition \ref{prop:criterion} 
that $\delta^{(j,\beta)}=\delta^{(j,\beta)^+}=0 $. 
Hence, we deduce from the induction hypothesis that  $\Delta^{(j,\beta)}=0$. 
On the other hand, as 
$t_{j,\beta} = 0$, we have $(j,\beta)\in C$ and so $y_{j,\beta}^{(j,\beta)}=0$.  Thus, it
follows from \cite[Proposition 3.15]{GLL} that 
$\Delta^{(j,\beta)^+}=\Delta^{(j,\beta)}=0 $, as desired.

$\bullet \bullet$  Assume that $t_{j,\beta}\neq 0$, 
and that $i_l=j$, or that $\beta\in \{\alpha_1,\dots,\alpha_l\}$,
or that $\beta < \alpha_1$. 
As we are assuming that $\delta^{(j,\beta)^+}=0 $, 
it follows from Proposition 
\ref{prop:criterion} that $\delta^{(j,\beta)}=\delta^{(j,\beta)^+}=0 $. 
Hence, we deduce from the induction hypothesis that  $\Delta^{(j,\beta)}=0$. 
On the other hand, as 
$t_{j,\beta} \neq  0$, we have $(j,\beta) \notin C$ and so $y_{j,\beta}^{(j,\beta)} \neq 0$. 
Moreover, as  
$i_l=j$,  or $\beta\in \{\alpha_1,\dots,\alpha_l\}$, 
or $\beta < \alpha_1$, 
it follows from \cite[Proposition 3.15]{GLL} that 
$\Delta^{(j,\beta)^+}=\Delta^{(j,\beta)}=0 $, as desired.

$\bullet \bullet$ Assume that 
$t_{j,\beta}\neq 0$ and $i_l 
< j$, while $\alpha_{h} < \beta < \alpha_{h+1}$ 
for some $h \in \onel$. 
Then, as in the previous case, we have $y_{j,\beta}^{(j,\beta)} \neq 0$. 
Moreover, it follows from Proposition \ref{prop:criterion} that 
$\delta^{(j,\beta)^+}=0 $ implies $\delta^{(j,\beta)}=0$ and 
either $\delta_{\alpha_h \rightarrow \beta}^{(j,\alpha_h)} =0$ 
or $ x_{j,\alpha_h}^{(j,\alpha_h)} =0$. 
Hence, we deduce from the induction hypothesis that $\Delta^{(j,\beta)}=0$ 
and 
either 
$\Delta_{\alpha_h \rightarrow \beta}^{(j,\alpha_h)} =0$ 
or $ y_{j,\alpha_h}^{(j,\alpha_h)} =0$. 
Finally, it follows from \cite[Proposition 3.15]{GLL} that 
$\Delta^{(j,\beta)^+}=0$, as desired. 
\end{proof}

\begin{theo}
\label{maintheo} Let $\mathcal{F}$ be a family of minors in the coordinate ring $\OMmpc$, and let $\mathcal{F}_q$ be the corresponding family of quantum minors in $\mmpc$. Then the following are equivalent:
\begin{enumerate}
\item The totally nonnegative cell associated to $\mathcal{F}$ in $\Mmpnonneg$ is 
non-empty. 
\item $\mathcal{F}$ is the set of all minors that vanish on the closure of some $\hc$-orbit of symplectic leaves in $\Mmpc$.
\item $\mathcal{F}_q$ is the set of all quantum minors that belong to some $\hc$-prime in $\mmpc$.
\end{enumerate}
\end{theo}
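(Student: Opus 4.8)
The plan is to derive the theorem by combining three inputs: the equivalence of (1) and (2), which is the main result of \cite{GLL} and involves no quantum algebra; the comparison result Theorem \ref{theo:comparison}; and the parametrisations by $m\times p$ Cauchon diagrams of both $\HSpec(\mmpc)$ (via $C\mapsto J_C$) and $\hc$-$\mathrm{PSpec}(\OMmpc)$ (via $C\mapsto J'_C$). The genuinely new ingredient here is Theorem \ref{theo:comparison}; the rest is a matter of threading together the known bijections.

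First, for (1) $\Leftrightarrow$ (2): by \cite{GLL}, a family of minors $\mathcal{F}$ defines a non-empty totally nonnegative cell in $\Mmpnonneg$ if and only if $\mathcal{F}$ is exactly the set of all minors vanishing on the closure of some $\hc$-orbit of symplectic leaves in $\Mmpc$. This is a statement about the Poisson variety and total positivity alone, so it can be quoted directly.

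Next, for (2) $\Leftrightarrow$ (3): recall that $C\mapsto J_C$ is a bijection from $\mathcal{C}_{m,p}$ onto $\HSpec(\mmpc)$ and $C\mapsto J'_C$ is a bijection from $\mathcal{C}_{m,p}$ onto $\hc$-$\mathrm{PSpec}(\OMmpc)$, both recalled at the start of Section \ref{section:main result}; moreover, as recalled above in the discussion of symplectic leaves, the minors belonging to the Poisson $\hc$-prime $J'_C$ are precisely those that vanish on the closure of the $\hc$-orbit of symplectic leaves corresponding to $C$. Hence the families of minors appearing in (2) are exactly the sets of minors contained in $J'_C$, as $C$ ranges over $\mathcal{C}_{m,p}$, and the families of quantum minors appearing in (3) are exactly the sets of quantum minors contained in $J_C$, as $C$ ranges over $\mathcal{C}_{m,p}$. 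Now Theorem \ref{theo:comparison} says precisely that, for each fixed $C$, a quantum minor $[I|\Lambda]_q$ lies in $J_C$ if and only if the minor $[I|\Lambda]$ lies in $J'_C$; equivalently, under the tautological correspondence $\mathcal{F}\leftrightarrow\mathcal{F}_q$ between families of minors and families of quantum minors, the family of quantum minors contained in $J_C$ corresponds to the family of minors contained in $J'_C$. Feeding this into the two descriptions above, $\mathcal{F}_q$ is the set of all quantum minors in some $\hc$-prime of $\mmpc$ exactly when $\mathcal{F}$ is the set of all minors in the corresponding Poisson $\hc$-prime of $\OMmpc$, i.e.\ exactly when $\mathcal{F}$ satisfies (2). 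This closes the cycle of implications.

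I do not expect this final assembly to present any real difficulty: the whole weight of the argument sits in Theorem \ref{theo:comparison}, whose proof is an induction along $E$ under the deleting-derivations algorithm, playing the quantum criterion (Proposition \ref{prop:criterion}) off against its Poisson analogue \cite[Proposition 3.15]{GLL} and using the common starting point $x^{(1,2)}_{\ia}=0 \Leftrightarrow (\ia)\in C \Leftrightarrow y^{(1,2)}_{\ia}=0$. The one subtlety worth keeping in view is that (1), (2) and (3) each refer to the \emph{complete} set of (quantum) minors with the stated vanishing/membership property, not to a mere generating family, so that all three correspondences above are between full such families; with that convention the chain of equivalences is exact and no counting or genericity argument is needed beyond what is already built into Theorem \ref{theo:comparison}.
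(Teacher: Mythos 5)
Your proof is correct and follows essentially the same route as the paper: (1)$\Leftrightarrow$(2) quoted from \cite{GLL}, then (2) rephrased as membership in a Poisson $\hc$-prime, and (2)$\Leftrightarrow$(3) via Theorem~\ref{theo:comparison}. The only difference is that you make explicit the Cauchon-diagram bijections $C\mapsto J_C$ and $C\mapsto J'_C$ that underpin the last step, whereas the paper leaves these implicit in the appeal to Theorem~\ref{theo:comparison}; that extra detail is accurate and harmless.
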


\begin{proof} 
The equivalence of (1) and (2) is proved in
\cite[Theorem 6.2]{GLL}. 

On the other hand, as discussed in \S2.2 above, the Poisson $\hc$-primes in $\OMmpc$ are in bijection with the
$\hc$-orbits of symplectic leaves in $\Mmpc$, and the
minors that belong to a Poisson $\hc$-prime ideal are exactly those that
vanish on the closure of the corresponding $\hc$-orbit of symplectic leaves in
$\Mmpc$. Thus, (2) holds if and only if $\mathcal{F}$ is the set of all minors
that belong to some Poisson $\hc$-prime ideal in $\OMmpc$. 
The remaining equivalence, (2)$\Longleftrightarrow$(3), now follows from Theorem \ref{theo:comparison}. 
\end{proof}

The families of minors that vanish on the closure of an $\hc$-orbit of
symplectic leaves have been explicitly described in \cite[Theorem
2.11]{GLL}. They are parametrised by the
set $\mathcal{S}$ of restricted permutations. Theorem \ref{maintheo} shows
that the quantum analogues of these families provide the families of quantum
minors that belong to $\hc$-primes in $\mmpc$.

Let us now be more precise. In \cite[Definition 2.6]{GLL} a family of minors
$\mathcal{M}(w)$ is defined for each $w \in \mathcal{S}$. The corresponding 
definition for families of quantum minors is given below. 

\begin{defi} \label{definition-M(w)}
For $w\in \mathcal{S}$, define $\mathcal{M}_q(w)$ to be the set of those quantum minors $[I|\Lambda]_q$ of $\mmpc$, with
$I\subseteq \onem$ and $\Lambda\subseteq \onep$, that satisfy at least one of the following conditions. 
\begin{enumerate}
\item $I\not\leq \wom w(L)$ for all $L\subseteq \onep\cap
w^{-1} \onem$ such that $|L|=|I|$ and $L\leq \Lambda$. 
\item $m+\Lambda\not\leq w\woN(L)$ for all $L\subseteq \onem\cap
\woN w^{-1} \gc m+1, N\dc$ such that $|L|=|\Lambda|$ and $L\leq I$. 
\item There exist $1\leq r \leq s\leq p$ such that 
$|\Lambda\cap \gc r, s \dc|> |\gc r, s \dc\setminus 
w^{-1} \gc m+r,\, m+s\dc |$.
\item There exist $1\leq r \leq s\leq m$ such that 
$|I\cap \gc r, s \dc|>|\woN \gc r, s \dc \setminus 
w^{-1} \wom \gc r, s \dc |$.
\end{enumerate}
\end{defi}

We are now in position to establish the following result which answers
positively a conjecture of Brown, Yakimov and the first named author in the
case where $\chr(\bbK)=0$ and $q$ is transcendental over
$\bbQ$.

\begin{theo} \label{qminorsHprimes}
\begin{enumerate}
\item Let $J$ be an $\hc$-prime ideal of $\mmpc$. Then there exists  $w \in \mathcal{S}$ such that $\mathcal{M}_q(w)$ is exactly the set of those quantum minors that belong to $J$.
\item Assume that $\chr(\bbK)=0$ and $q$ is transcendental over
$\bbQ$. Then we have:
$$\hc \mbox{-}\mathrm{Spec}(\mmpc)=\{ \langle \mathcal{M}_q(w) \rangle ~|~ w \in \mathcal{S} \}. $$
\end{enumerate}
\end{theo}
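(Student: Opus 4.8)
The plan is to deduce Theorem \ref{qminorsHprimes} by assembling the pieces already in place. For part (1), I would start from an $\hc$-prime $J$ of $R = \mmpc$. By the Cauchon bijection $C\mapsto J_C$ recalled at the start of Section \ref{section:main result}, there is a unique $m\times p$ Cauchon diagram $C$ with $J = J_C$. Let $J'_C$ be the corresponding Poisson $\hc$-prime of $\OMmpc$ under the parallel bijection from \cite[Section 5]{GLL}. By Theorem \ref{theo:comparison}, the set of quantum minors $[I|\Lambda]_q$ lying in $J_C$ coincides, index-set for index-set, with the set of minors $[I|\Lambda]$ lying in $J'_C$. Now invoke the discussion in \S2.2: the minors belonging to a Poisson $\hc$-prime of $\OMmpc$ are exactly the minors vanishing on the closure of the corresponding $\hc$-orbit of symplectic leaves, and by \cite[Theorem 2.11]{GLL} (together with \cite[Theorem 2.9]{GLL} and the parametrisation of $\hc$-$\mathrm{PSpec}$ by $\mathcal{S}$) this set of minors is precisely $\mathcal{M}(w)$ for a suitable $w\in\mathcal{S}$. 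Transporting back through Theorem \ref{theo:comparison}, the quantum minors in $J$ are exactly $\mathcal{M}_q(w)$, which is the set obtained by replacing each classical minor in $\mathcal{M}(w)$ by its quantum analogue — and this is exactly what Definition \ref{definition-M(w)} records, since the defining conditions (1)--(4) there are combinatorial conditions on $I,\Lambda,w$ that do not refer to $q$ at all, hence match the conditions defining $\mathcal{M}(w)$ in \cite[Definition 2.6]{GLL}. One should be careful to note that distinct $w\in\mathcal{S}$ may in principle give the same family $\mathcal{M}_q(w)$, but existence of some $w$ is all that is claimed.

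For part (2), assume $\chr(\bbK)=0$ and $q$ transcendental over $\bbQ$. The inclusion $\{\langle\mathcal{M}_q(w)\rangle \mid w\in\mathcal{S}\} \subseteq \hc\text{-}\mathrm{Spec}(R)$ is the easy direction: every quantum minor is an $\hc$-eigenvector, so any ideal generated by a set of quantum minors is $\hc$-invariant, and it is prime because — by Theorem \ref{extHprimegens} — each $\hc$-prime $J_C$ is generated by the quantum minors it contains, and by part (1) that set is $\mathcal{M}_q(w)$ for some $w$, so $J_C = \langle\mathcal{M}_q(w)\rangle$; thus the right-hand set is a subset of the left-hand set. Conversely, given any $\hc$-prime $J$ of $R$, part (1) furnishes $w\in\mathcal{S}$ with $\mathcal{M}_q(w)$ equal to the set of quantum minors in $J$, and Theorem \ref{extHprimegens} gives $J = \langle\mathcal{M}_q(w)\rangle$. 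Hence the two sets coincide.

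The main obstacle — or rather the only non-bookkeeping point — is making rigorous the identification of $\mathcal{M}_q(w)$ with the quantum version of $\mathcal{M}(w)$, i.e.\ checking that Definition \ref{definition-M(w)} here literally transcribes \cite[Definition 2.6]{GLL}. Since conditions (1)--(4) in Definition \ref{definition-M(w)} are purely combinatorial statements about the index sets $I$, $\Lambda$, the permutation $w$, the Bruhat order, and cardinalities of intersections, with no occurrence of $q$, a quantum minor $[I|\Lambda]_q$ satisfies them if and only if the classical minor $[I|\Lambda]$ satisfies the corresponding conditions in \cite[Definition 2.6]{GLL}. Therefore the bijection $[I|\Lambda]\leftrightarrow[I|\Lambda]_q$ between classical and quantum minors restricts to a bijection $\mathcal{M}(w)\leftrightarrow\mathcal{M}_q(w)$ for every $w$. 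Combined with the minor-by-minor matching in Theorem \ref{theo:comparison} and the description of minors in Poisson $\hc$-primes from \S2.2 and \cite[Theorem 2.11]{GLL}, this closes the argument; everything else is a chaining together of the cited bijections, and no new estimate or construction is required beyond what Theorems \ref{extHprimegens} and \ref{theo:comparison} already provide.
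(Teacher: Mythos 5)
Your argument for part (1) is essentially the paper's proof, merely unfolded: the chain $J = J_C \leftrightarrow J'_C$, then Theorem~\ref{theo:comparison}, then the \S 2.2 identification of minors in Poisson $\hc$-primes with minors vanishing on closures of $\hc$-orbits, then \cite[Theorem 2.11]{GLL}, is exactly what lies behind the paper's citation of Theorem~\ref{maintheo}. Your observation that $\mathcal{M}_q(w)$ is the verbatim transcription of $\mathcal{M}(w)$ --- because the defining conditions in Definition~\ref{definition-M(w)} are $q$-free combinatorics on $I$, $\Lambda$, $w$ --- is correct and worth spelling out.

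The gap is in part (2). You label the inclusion $\{\langle\mathcal{M}_q(w)\rangle : w\in\mathcal{S}\}\subseteq\HSpec(\mmpc)$ the ``easy direction,'' but the argument you actually give in that paragraph (each $\hc$-prime $J_C$ is generated by its quantum minors, those minors form $\mathcal{M}_q(w)$ for some $w$ by part (1), hence $J_C=\langle\mathcal{M}_q(w)\rangle$) proves the \emph{reverse} inclusion $\HSpec(\mmpc)\subseteq\{\langle\mathcal{M}_q(w)\rangle : w\in\mathcal{S}\}$. Your ``conversely'' paragraph then re-derives this same inclusion a second time. Nowhere is it established that for an \emph{arbitrary} $w\in\mathcal{S}$ the ideal $\langle\mathcal{M}_q(w)\rangle$ is prime: the $\hc$-invariance is immediate from the eigenvector property, but part (1) is a ``there exists $w$'' statement and does not say that every $w\in\mathcal{S}$ is realized by some $\hc$-prime. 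To close the gap, one should invoke \cite[Theorem 2.9]{GLL}, which says that for \emph{every} $w\in\mathcal{S}$ the family $\mathcal{M}(w)$ is precisely the set of minors vanishing on $\overline{\mathcal{P}_{\woN w}}$; by the \S 2.2 correspondence and Theorem~\ref{theo:comparison}, $\mathcal{M}_q(w)$ is then exactly the set of quantum minors of the corresponding $\hc$-prime $J_C$, and Theorem~\ref{extHprimegens} gives $\langle\mathcal{M}_q(w)\rangle=J_C\in\HSpec(\mmpc)$. Alternatively, a counting argument suffices: $|\HSpec(\mmpc)|=|\mathcal{S}|$, you have shown $\HSpec(\mmpc)\subseteq\{\langle\mathcal{M}_q(w)\rangle:w\in\mathcal{S}\}$, and the right side has at most $|\mathcal{S}|$ elements, forcing equality.
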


\begin{proof} 
(1) is a consequence of Theorem \ref{maintheo} 
and \cite[Theorem 2.11]{GLL}. Then (2) follows from (1) and Theorem \ref{extHprimegens}. \end{proof}

A different approach to the second part of this result, applicable to many
quantized coordinate algebras, has been recently and independently developed
by Yakimov in \cite{Yak} (see \cite[Theorem 5.5]{Yak}).

\section*{Acknowledgements}

The results in this paper were announced during the mini-workshop
``Non-neg\-a\-tiv\-i\-ty is a quantum phenomenon'' that took place at the
Mathematisches For\-schungs\-in\-sti\-tut Oberwolfach, 1--7 March 2009,
\cite{MFO}; we thank the director and staff of the MFO for providing
the ideal environment for this stimulating meeting. We also
thank Konni Rietsch, Laurent Rigal, Lauren Williams and Milen Yakimov for discussions and comments concerning this paper both at the workshop and at other times. Finally, we thank Karel Casteels for sending us his preprint \cite{Ca}.

\bibliographystyle{amsplain}
\bibliography{biblio}

\vskip 1cm

\providecommand{\bysame}{\leavevmode\hbox to3em{\hrulefill}\thinspace}
\providecommand{\href}[2]{#2}

\vskip 1cm

\noindent K.R. Goodearl:\\
Department of Mathematics,\\
 University of California,\\
 Santa Barbara, CA 93106, USA\\
Email: {\tt goodearl@math.ucsb.edu} \\

\noindent 
S. Launois: \\
School of Mathematics, Statistics and Actuarial Science,\\
University of Kent\\
Canterbury, Kent CT2 7NF, UK\\
Email: {\tt S.Launois@kent.ac.uk} \\

\noindent 
T.H. Lenagan: \\
Maxwell Institute for Mathematical Sciences\\
School of Mathematics, University of Edinburgh,\\
James Clerk Maxwell Building, King's Buildings, Mayfield Road,\\
Edinburgh EH9 3JZ, Scotland, UK\\
E-mail: {\tt tom@maths.ed.ac.uk}

 \end{document}